\theoremstyle{plain}
\newtheorem{result}{Result}
\newcommand*{\G}{\ensuremath{\mathbb{G}\text{r}}}
\newcommand*{\IG}{\ensuremath{\infty\text{-}\G}}
\newcommand*{\C}{\ensuremath{\mathbb{C}\text{AT}}}
\newcommand*{\TC}{\ensuremath{T\text{-}\C}}
\DeclareMathOperator{\Mo}{Mo}
\DeclareMathOperator{\colim}{colim}
\title{Overcategories and free monoids for overcategories}
\author{Camell Kachour}
\begin{document}
\maketitle
\begin{abstract}

An overcategory with base category $\mathcal{C}$ is merely any functor into $\mathcal{C}$.
 In this paper we extend the work of Dominique Bourn and Jacques Penon \cite{bournpenon:moncart} on overcategories. In particular we show that Freyd's adjoint theorem, a theorem of Barr and Wells in \cite{barrwells:ttt} are still valid in the overcategorical context. We also show that a free monoid 
  construction remains valid in the context of overcategories. The motivation for this study is the 
   development of higher categories as found in \cite{bournpenon:moncart} and in 
  \cite{kach3:redmacq}.

 \end{abstract}
\tableofcontents
\vspace{1cm}

 Overcategories are just objects of the comma category
 $(\C\downarrow \mathcal{C})$ where $\mathcal{C}\in \C$ without any other 
 requirements and in \cite{Street_Schuma} the authors call it parametrized categories. 
 Thus, as categorical structure, they are poorer than indexed categories
 or fibrations. It is well known that we can extend most categorical concepts to
  indexed categories and fibrations, and most of the usual theorems for categories
  (for instance, Freyd's adjoint theorem or Beck's monadicity theorem)
  have their equivalent for indexed categories and fibrations 
  (for indexed categories, see \cite{Pare_Schuma}). Supprisingly,
  they are few studies for overcategories despite the fact that we find applications
  of it for example in \cite{bournpenon:moncart} and in \cite{kach3:redmacq}, 
  where both of these works deal with the perspective of higher category theory. 
  
  Let us explain briefly the differences between our study of overcategories and others important
  approaches as we can find for example in \cite{Pare_Schuma} for the indexed categories or in 
  \cite{jean_benabou1985, Street_Schuma} for the fibrations. In \cite{Pare_Schuma} the authors have developped intensively the theory of indexed categories, which are fibrations with a choice of a cleavage (or if we use the "Grothendieck construction", it is a pseudofunctor with a choice of isomorphisms in Cat). In these very precise context they proved an "Adjoint Functor Theorem" á la Peter Freyd. They used an "Initial object theorem" to prove 
  it, but all the time in the context of their "Indexed categories". In our approach we prove also an
  "Adjoint Functor Theorem" a la Peter Freyd and an "Initial object theorem" to prove 
  it, but in the poorer context of overcategories. In \cite{jean_benabou1985} J.B{\'e}nabou use the theory of fibrations with a concept of "definability" for fibrations, and he tried to make more much clearer the problem of finding the good "logical environment" to build category theory. For him, category theory can be build with the notion
of fibration and definability. An other aspect of the paper \cite{jean_benabou1985} is the fact that the author said that "Indexed categories" are not the good environment to build category theory, because the choice of cleavage (using the choice axiom), which is a part of the definition of an "Indexed categories", makes things much more harder and less natural, where many confusions can be made. Thus the spirit of the paper of 
Benabou is completely different from our approach where we just show that we can developp some concepts of category theory for overcategories rather than to be embarrased by the foundation of category theory itself. In the
beginning of the paper \cite{Street_Schuma} the authors start to speak about overcategories that they called 
"parametrized categories", however then they "enriched" quickly this notion with the notion of "cartesian cones", "cartesian maps" (which is here a specific case of the cartesian cones), and fibrations. In all their paper they studied properties like "wellpoweredness", "small idempotency", "generators", etc. which are the major new notions of this paper, but with results involving at less cartesian maps or more, fibrations. In particular they gave a new characterisation of an elementary toposes with their concept of wellpowerdness for parametrized categories. But despite of all these very interesting results, their paper has different perspectives from our approach where we
investigate overcategories without any others structures on it.
  
 Dominique Bourn and Jacques Penon have used overcategories 
 in \cite{bournpenon:moncart} (called "surcat{\'e}gories" in their article) as a major tool for their 
 studies of categorification, which is one of the most important question in higher category theory.
  
 However we believe that the level of generality of our study of overcategories in this article and in
 the article \cite{bournpenon:moncart}, allows the possibility of applications to many other contexts, especially
 those where the categories involved have structure poorer than fibrations or indexed categories. Let us be
 more precise on this point. For example if for a fixed category $\mathcal{C}$ it is difficult to see that it is complete or
 cocomplete, but this category is equipped with a functor $\xymatrix{\mathcal{C}\ar[r]^{F}&\mathcal{D}}$ such 
 that the fibers $F^{-1}(d) (d\in\mathcal{D}$) are complet or cocomplet and this (co)completness 
 has some good properties among the whole category $\mathcal{C}$, then we can ask if this underlying
 structure of $\mathcal{C}$ is good enough to resolve some mathematical questions which are involved. This is
 exactly the first motivation of the author in \cite{kach3:redmacq} to study 
 overcategories because at that time he found that the cocompleteness
 of the category $\TC$ of $\mathbb{T}$-categories was not evident.    
  
 The first section of this article deals with the overcategorical version of 
 some classical theorems of the category theory.
 The "overcategorical theorems" that we establish in this paper, especially the overadjoint Freyd theorem, and a theorem overcategoric of Barr-Wells theorem, could be useful for classical category theory itself.
  
The second section of this article deals with the free monoid construction in the context
of overcategories. We build it within framework of the overmonoidal
overcategories\cite{bournpenon:moncart}. As a matter of fact
in \cite{bournpenon:moncart} the authors establish an adjunction
result (to obtain free "overmonoids") in an ideal context they label
"numeral" \cite{bournpenon:moncart}.  We
demonstrate a similar theorem (\cref{theoremdufreesurmonoid}) which
also results from an ideal context that I label liberal and which
allows us to establish a result of free overmonoids result

 I am grateful to Jacques Penon who permitted me to access the
details of his conjoint work \cite{bournpenon:moncart} with Dominique Bourn.
The research for this present paper was completed in 2009.

  \vspace*{1cm}   

  \section{Theory of Overcategories }
\label{TheoryofSurcategories}
Let $\mathbb{G}$ be a fixed category.  An overcategory is an object of the
$2$-category $\C/\mathbb{G}$. Thus it is given by a couple
$(\mathcal{C},\mathtt{A})$, where $\mathcal{C}$ is a category and
$\mathcal{C}\xrightarrow{\mathtt{A}}\mathbb{G}$ is a functor (often called "arity functor", in reference
to its use in this paper). In what follows the arity functor
is often noted with the letter $\mathtt{A}$ because there is no risk
of confusion.  The evident morphisms of $\C/\mathbb{G}$ are called
overfunctors, but we also need in this paper morphim between
overcategories with different base categories $\mathbb{G}$ and
$\mathbb{G}'$. Therefore such a morphism
$(\mathcal{C},\mathtt{A})\xrightarrow{(F,F_{0})}(\mathcal{C}',\mathtt{A}')$
is given by two functors: $\mathcal{C}\xrightarrow{F}\mathcal{C}'$ and
$\mathbb{G}\xrightarrow{F_{0}}\mathbb{G}'$ such as
$\mathtt{A}'F=F_{0}\mathtt{A}$ (see for example
\cref{FreydTheoremintheSurcategoricalcontext}).  For a fixed
overcategory $(\mathcal{C},\mathtt{A})$, its objects and its morphisms
are respectively objects and morphisms of the domain category
$\mathcal{C}$.

The pairs of adjoints morphisms and monads in the $2$-category
$\C/\mathbb{G}$ are called respectively pairs of adjoint
overfunctors and overmonads.  In fact every overcategorical concept will
be expressed by using "over" before the categorical concept
that it generalizes.  But we sometimes forget the word "over", when the
context implies that no confusion is possible.  It is easy to see that
the category of algebras for a given overmonad is an overcategory. The
objects of this overcategory will be called overalgebras.
 
We are going to see that most of the notions in the $2$-category $\C$
can be done again in the $2$-category $\C/\mathbb{G}$, and it is very
likely that most of the concepts and theorems in $\C$ extend to
$\C/\mathbb{G}$. We will particularly demonstrate three theorems in
$\C/\mathbb{G}$ coming from three important theorems in $\C$: Freyd's
Overadjoint Theorem (which is the overcategorical version of the classical Freyd
Adjoint Theorem. See \cref{surfreyd}), Barr-Wells's
Overcategorical Theorem (which is the overcategorical version of the
result that we can find in \cite{barrwells:ttt}. See
\cref{SurBarrWells}), and Beck's Overcategorical Theorem (which is the
overcategorical version of Beck's classical theorem. See
\cref{surbeck}).  These theorems are the obvious generalisations of
the classical ones.
\subsection{Definition of Over(co)limits}
\label{defsurlimits}
In \cite{bournpenon:moncart} the notions of limits and
colimits in $\C/\mathbb{G}$ are defined and these notions will be used
afterwards. To facilitate the reader we will recall the definitions.
 
If $\mathbb{C}$ is a small category and if $\mathcal{E}$ is a
category, then we have the classical diagonal functor
$\mathcal{E}\xrightarrow{\Delta}\mathcal{E}^{\mathbb{C}}$, which
sends an object to a constant functor and which sends a morphism to a
constant natural transformation.
 
Moreover if $(\mathcal{E},\mathtt{A})$ is a overcategory, let
$\mathcal{E}^{(\mathbb{C})}$ be the subcategory of
$\mathcal{E}^{\mathbb{C}}\times\mathbb{G}$ given by:
\begin{description}
\item $\mathcal{E}^{(\mathbb{C})}(0)=
  \{(F,B)\in\mathcal{E}^{\mathbb{C}}\times\mathbb{G}/
  \mathtt{A}F=\Delta(B)\}$,
\item $\mathcal{E}^{(\mathbb{C})}(1)=
  \{(F,B)\xrightarrow{(\tau,b)}(F',B')/ b\in
  \mathbb{G}(1)\hspace{.1cm}\text{and}\hspace{.1cm}
  \tau\hspace{.1cm}$ is a natural transformation such as
  $\hspace{.1cm}\mathtt{A}\tau=\Delta(b)\}$.
\end{description}

$\mathcal{E}^{(\mathbb{C})}$ has a natural overcategory structure given
by the second projection:
$\mathcal{E}^{(\mathbb{C})}\xrightarrow{\mathtt{A}}\mathbb{G}$,\hspace{.2cm}
$(F,B)\longmapsto B$. In fact
$(\mathcal{E}^{(\mathbb{C})},\mathtt{A})$ is a cotensor of the
$\C$-enriched category $\C/\mathbb{G}$ ($\C/\mathbb{G}$ is a
$\C$-enriched category because it is a $2$-category), because we have
the following isomorphism in $\C$

\[\C/\mathbb{G}((\mathcal{E}',\mathtt{A});(\mathcal{E}^{(\mathbb{C})},\mathtt{A}))
\simeq
Funct(\mathbb{C};\C/\mathbb{G}((\mathcal{E}',\mathtt{A});(\mathcal{E},\mathtt{A}))).\]

We also have the diagonal overfunctor (also noted $\Delta$):
$(\mathcal{E},\mathtt{A})\xrightarrow{\Delta}(\mathcal{E}^{(\mathbb{C})},\mathtt{A})$
defined by $x\longmapsto (\Delta(x),\mathtt{A}(x))$.  If
$(F,B)\in(\mathcal{E}^{(\mathbb{C})},\mathtt{A})$, an overcone of
$(F,B)$ is a morphism $\Delta(x)\xrightarrow{(\tau,b)}(F,B)$
($x\in\mathcal{E}$) of $(\mathcal{E}^{(\mathbb{C})},\mathtt{A})$,
where $\mathtt{A}(x)\xrightarrow{b}B$ is a morphism of $\mathbb{G}$.
In the same way we define overcocones.
 
It is easy to see that if $\mathbb{C}$ is connected, then every
overcone is a cone in the classical sense (respectively, every overcocone
is a cocone in the classical sense).
 
The overcategory $(\mathcal{E},\mathtt{A})$ has 
$\mathbb{C}$-overlimits (that \cite{bournpenon:moncart} calls
$(\mathbb{C})$-limits) if every
$(F,B)\in(\mathcal{E}^{(\mathbb{C})},\mathtt{A})$ has a universal
overcone $\Delta(x)\xrightarrow{(\tau,1_{B})}(F,B)$ such as
$\mathtt{A}(x)=B$, i.e if we give ourselves another overcone
$\Delta(y)\xrightarrow{(\sigma,b)}(F,B)$ (where
$\mathtt{A}(y)\xrightarrow{b}B$ is a morphism of $\mathbb{G}$) then
there is a unique morphism $y\xrightarrow{f}x$ in $\mathcal{E}$ such
that $(\tau,1_{B})\Delta(f)=(\sigma,b)$.  The definition of
$\mathbb{C}$-overcolimit is dual.  The definition of overlimits and
overcolimits enable us to include the case where $\mathbb{C}$ is the
empty category, which gives an alternative definition of 
overinitial objects (see \cref{FreydTheoremintheSurcategoricalcontext})
and overfinal objects.
 
If $\mathbb{C}$ is connected and nonempty then it is easy to see that
the following definitions are equivalent
 
\begin{itemize}
\item $(\mathcal{E},\mathtt{A})$ has $\mathbb{C}$-overlimits.
\item $\forall(F,B)\in(\mathcal{E}^{(\mathbb{C})},\mathtt{A})$,
  $(F,B)$ has a universal overcone
  $\Delta(x)\xrightarrow{(\tau,1_{B})}(F,B)$ such as
  $\mathtt{A}(x)=B$.
\item $\forall(F,B)\in(\mathcal{E}^{(\mathbb{C})},\mathtt{A})$, the
  functor $\mathbb{C}\xrightarrow{F}\mathcal{E}_{B}$ has a limit which
  is preserved by the canonical inclusion
  $\mathcal{E}_{B}\hookrightarrow\mathcal{E}$.
\item The diagonal overfunctor
  $(\mathcal{E},\mathtt{A})\xrightarrow{\Delta}(\mathcal{E}^{(\mathbb{C})},\mathtt{A})$
  has a right overadjoint.
\end{itemize}
In the same way, if $\mathbb{C}$ is connected and nonempty we have
dual definitions for $\mathbb{C}$-overcolimits.
\begin{remark}
  Let $\vec{\mathbb{N}}$ be the category of non-negative integers
  with the natural order. In the terminology adopted in
  ~\cite{bournpenon:moncart} $\vec{\mathbb{N}}$-limits are colimits. We
  prefer to adopt the word $\vec{\mathbb{N}}$-colimit for this
  specific kind of filtered colimit. And in the overcategorical context we
  prefer the word $\vec{\mathbb{N}}$-overcolimits instead of
  $(\vec{\mathbb{N}})$-colimits (as adopted by
  \cite{bournpenon:moncart}).
\end{remark}

We are now going to define $K$-equalizers and $K$-coequalizers which
are important notions because with them we get a overadjonction result
similar to Freyd's Adjoint theorem (\cref{surfreyd}), but more
general. 
 
A overcategory $(\mathcal{E},\mathtt{A})$ has $K$-equalizers if every
pair $\xymatrix{a\ar[r]<+2pt>^{f}\ar[r]<-2pt>_{g}&b}$ in $\mathcal{E}$, which has the
property $\mathtt{A}(f)=\mathtt{A}(g)$, has an equalizer $e$ in
$\mathcal{E}$
\[\xymatrix{c\ar[d]_{e}&\\
  a\ar[r]<+2pt>^{f}\ar[r]<-2pt>_{g}&b}\]
such as $\mathtt{A}(e)=\mathtt{A}(1_{a})$.  The definition of
$K$-coequalizers is dual.

If $T$ is a overmonad on $(\mathcal{E},\mathtt{A})$, the Eilenberg-Moore
algebra category $\mathcal{E}^{T}$ is trivially an overcategory
$(\mathcal{E}^{T},\mathtt{A})$ where its objects are called
overalgebras, not only to emphasise the overcategorical context, but also
to focus on the fact that an overalgebra is an algebra which lives in a
fiber.

The following propositions are immediate.

\begin{proposition}
  \label{splitsurfork}
  Let us call split overfork, a split fork in the overcategorical context,
  i.e it is a diagram
  $\xymatrix{a\ar[r]<+2pt>^{f}\ar[r]<-2pt>_{g}&b\ar[r]^{h}&c}$ which
  is a fork in a fiber $\mathcal{E}_{B}$. Then such split overforks are
  absolute overcoequalizers.
\end{proposition}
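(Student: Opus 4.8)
The plan is to reduce the statement to the classical lemma that a split fork (a split coequalizer in the sense of \cite{barrwells:ttt}) is an absolute coequalizer, and then to check that the arity conditions built into the overcategorical definitions of \cref{defsurlimits} hold for free. Recall that the diagram $\xymatrix{a\ar[r]<+2pt>^{f}\ar[r]<-2pt>_{g}&b\ar[r]^{h}&c}$ being a split fork means it comes equipped with morphisms $c\xrightarrow{s}b$ and $b\xrightarrow{t}a$ such that $hf=hg$, $hs=1_{c}$, $ft=1_{b}$ and $gt=sh$. In our situation $f,g,h,s,t$ all lie in one fiber $\mathcal{E}_{B}$, so $\mathtt{A}(f)=\mathtt{A}(g)=\mathtt{A}(h)=1_{B}$; in particular $\mathtt{A}(h)=\mathtt{A}(1_{b})$, so $h$ is an admissible candidate both for the $\mathbb{C}$-overcoequalizer of $(f,g)$, with $\mathbb{C}$ the parallel-pair category, and for a $K$-coequalizer.

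First I would check that $h$ really is the overcoequalizer of $(f,g)$. Since $\mathbb{C}$ is connected and nonempty, by the observation recalled in \cref{defsurlimits} an overcocone on $(f,g)$ in $(\mathcal{E},\mathtt{A})$ is just an ordinary cocone in $\mathcal{E}$, i.e. a morphism $b\xrightarrow{k}z$ with $kf=kg$. As $s$ and $t$ also lie in $\mathcal{E}_{B}\subseteq\mathcal{E}$, the diagram $a\rightrightarrows b\to c$ is still a split fork in $\mathcal{E}$, so the classical lemma yields a unique $c\xrightarrow{\ell}z$ with $\ell h=k$ (namely $\ell=ks$, uniqueness coming from $hs=1_{c}$). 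Since $\mathtt{A}(\ell)\mathtt{A}(h)=\mathtt{A}(k)$ and $\mathtt{A}(h)=1_{B}$, we get $\mathtt{A}(\ell)=\mathtt{A}(k)$, so $\ell$ respects the arity functor and is the required unique mediating morphism in $\mathcal{E}^{(\mathbb{C})}$; hence the overcocone with leg $h$ is universal.

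Next I would prove absoluteness, i.e. that every overfunctor carries this overcoequalizer to an overcoequalizer. Let $(\mathcal{E},\mathtt{A})\xrightarrow{(P,P_{0})}(\mathcal{E}',\mathtt{A}')$ be any overfunctor (possibly changing the base). Applying the functor $P$ to the whole diagram, including $s$ and $t$, again produces a split fork $Pa\rightrightarrows Pb\to Pc$, because its defining identities are equations between composites and are preserved by $P$; moreover $\mathtt{A}'(Ph)=P_{0}\mathtt{A}(h)=1_{P_{0}(B)}$, and likewise for $Pf$ and $Pg$, so this image is a split fork lying in the single fiber $\mathcal{E}'_{P_{0}(B)}$ — that is, a split overfork of $(\mathcal{E}',\mathtt{A}')$. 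By the previous paragraph it is the overcoequalizer of $(Pf,Pg)$ in $(\mathcal{E}',\mathtt{A}')$, so $(P,P_{0})$ preserves the overcoequalizer, as required (this is the form in which such overforks are expected to feed into the proof of Beck's overcategorical theorem, \cref{surbeck}).

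The computations are word-for-word the classical split-fork argument, which transports without change since it rests only on equalities between composites of morphisms. The one point that calls for any care — and the only genuinely overcategorical one — is the bookkeeping in the second paragraph: one has to confirm that the mediating morphism returned by the classical lemma is a bona fide morphism of $\mathcal{E}^{(\mathbb{C})}$, compatible with the arity functor, and that it is unique as such rather than merely as a morphism of $\mathcal{E}$. Both are immediate from the fact that $\mathtt{A}(h)$ is an identity, so this obstacle is essentially cosmetic.
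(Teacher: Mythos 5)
Your proof is correct: the paper itself offers no argument for this proposition (it is listed among those declared ``immediate''), and your reduction to the classical split-fork lemma together with the arity bookkeeping (noting $\mathtt{A}(h)=1_{B}$ forces the mediating morphism and the image under any overfunctor to have the right arities) is exactly the argument the paper is implicitly relying on. Nothing further is needed.
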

\begin{proposition}
  \label{suralgebrasourcoequalizers}
  Every overalgebra (for a fixed overmonad) is a overcoequalizer.
\end{proposition}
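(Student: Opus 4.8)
The plan is to recognise every overalgebra as the overcoequalizer of the overcategorical Beck pair of free overalgebras — the exact analogue of the classical presentation of an algebra as a coequalizer of free ones — and in fact as an \emph{absolute} overcoequalizer by appeal to \cref{splitsurfork}. Fix an overmonad $T=(T,\eta,\mu)$ on $(\mathcal{E},\mathtt{A})$ and an overalgebra $(a,\alpha)$, $\alpha\colon Ta\to a$; set $B=\mathtt{A}(a)$ and let $F\dashv U$ be the free/forgetful pair of adjoint overfunctors between $(\mathcal{E},\mathtt{A})$ and $(\mathcal{E}^{T},\mathtt{A})$. The candidate presentation is the usual Beck fork
\[(TTa,\mu_{Ta})\ \underset{T\alpha}{\overset{\mu_{a}}{\rightrightarrows}}\ (Ta,\mu_{a})\ \xrightarrow{\ \alpha\ }\ (a,\alpha),\]
which, exactly as classically, is a diagram of overalgebras and overalgebra morphisms. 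The first step is to observe that this whole diagram lives over $B$: since $T$ is a $1$-cell of $\C/\mathbb{G}$ we have $\mathtt{A}T=\mathtt{A}$, so $\mathtt{A}(TTa)=\mathtt{A}(Ta)=B$; since $\eta$ and $\mu$ are $2$-cells of $\C/\mathbb{G}$ all of their components have arity an identity; and applying $\mathtt{A}$ to the algebra unit law $\alpha\circ\eta_{a}=1_{a}$ forces $\mathtt{A}(\alpha)=1_{B}$ — precisely the sense in which an overalgebra is ``an algebra which lives in a fiber'' — whence also $\mathtt{A}(T\alpha)=1_{B}$. So every object of the fork has arity $B$ and every arrow arity $1_{B}$, both in $(\mathcal{E}^{T},\mathtt{A})$ and, after applying $U$, in $(\mathcal{E},\mathtt{A})$.

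Next I would apply $U$ and check that the resulting fork $TTa\rightrightarrows Ta\xrightarrow{\alpha}a$ in $\mathcal{E}$ is split, with sections $\eta_{a}\colon a\to Ta$ and $\eta_{Ta}\colon Ta\to TTa$: the identities $\alpha\circ\mu_{a}=\alpha\circ T\alpha$ (algebra associativity), $\alpha\circ\eta_{a}=1_{a}$ (algebra unit), $\mu_{a}\circ\eta_{Ta}=1_{Ta}$ (a monad unit law), and $T\alpha\circ\eta_{Ta}=\eta_{a}\circ\alpha$ (naturality of $\eta$ at $\alpha$) are exactly the split-coequalizer axioms. By the previous paragraph all the morphisms involved lie in the single fiber $\mathcal{E}_{B}$, so this is a split overfork in the sense of \cref{splitsurfork}, and hence $\alpha$ is an absolute overcoequalizer of $\mu_{a},T\alpha$ in $(\mathcal{E},\mathtt{A})$.

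It then remains to lift the universal property to $(\mathcal{E}^{T},\mathtt{A})$. Since the walking parallel pair is connected and nonempty, an overcocone on $\mu_{a},T\alpha$ in $(\mathcal{E}^{T},\mathtt{A})$ amounts to a cocone, i.e. an overalgebra morphism $h\colon(Ta,\mu_{a})\to(c,\gamma)$ with $h\circ\mu_{a}=h\circ T\alpha$ (carrying its forced arrow $\mathtt{A}(h)\colon B\to\mathtt{A}(c)$). As $\alpha$ is an absolute coequalizer in $\mathcal{E}$, the underlying map of $h$ factors uniquely as $\bar h\circ\alpha$; since $T$ preserves this absolute coequalizer, $T\alpha$ is epi, and from $\gamma\circ T\bar h\circ T\alpha=\gamma\circ Th=h\circ\mu_{a}=\bar h\circ\alpha\circ\mu_{a}=\bar h\circ\alpha\circ T\alpha$ we get $\gamma\circ T\bar h=\bar h\circ\alpha$, so $\bar h$ is an overalgebra morphism $(a,\alpha)\to(c,\gamma)$; moreover $\mathtt{A}(\bar h)\circ 1_{B}=\mathtt{A}(\bar h)\circ\mathtt{A}(\alpha)=\mathtt{A}(h)$, so $\bar h$ lies over the same arrow of $\mathbb{G}$ as $h$, and uniqueness of $\bar h$ is immediate because $\alpha$ is epi. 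Hence $(a,\alpha)$, with structure arrow $\alpha$, is the (absolute) overcoequalizer of the free overalgebras $(TTa,\mu_{Ta})\rightrightarrows(Ta,\mu_{a})$, which is the claim.

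The only mildly delicate point, the rest being the classical Beck computation carried across verbatim, is the twofold demand on the arity functor: it has to be strict on the structural $2$-cells $\eta,\mu$ — and therefore, via the unit law, on $\alpha$ — so that the entire Beck fork sits inside one fiber and \cref{splitsurfork} applies; yet the splitting of that fork only materialises after forgetting down to $(\mathcal{E},\mathtt{A})$, the sections $\eta_{a},\eta_{Ta}$ failing to be overalgebra morphisms, so one genuinely has to transport the absolute overcoequalizer back up along $U$ rather than read it off directly inside $(\mathcal{E}^{T},\mathtt{A})$.
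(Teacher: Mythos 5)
Your proof is correct and is exactly the standard Beck presentation that the paper has in mind: the paper offers no written proof (it declares the proposition ``immediate'' right after \cref{splitsurfork}, which is precisely the ingredient you invoke), and your argument --- the fork $(TTa,\mu_{Ta})\rightrightarrows(Ta,\mu_{a})\to(a,\alpha)$ living in one fiber, becoming a split overfork after applying $U$, and the transport of the absolute overcoequalizer back up to $(\mathcal{E}^{T},\mathtt{A})$ --- supplies the details the author leaves implicit. Nothing to object to; your closing remark about $\mathtt{A}(\alpha)=1_{B}$ being forced by the unit law is the one genuinely overcategorical point, and you handle it correctly.
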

\begin{proposition}
  \label{surcompletsureqsurpro}
  $(\mathcal{E},\mathtt{A})$ is overcomplete iff
  $(\mathcal{E},\mathtt{A})$ has overequalizers and overproducts.
\end{proposition}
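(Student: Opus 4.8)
The plan is to reproduce, in the overcategorical setting, the classical construction of an arbitrary small limit out of products and equalizers, the only genuine work being the bookkeeping of arities. Here ``overequalizers'' means $\mathbb{C}$-overlimits for $\mathbb{C}$ the parallel-pair category and ``overproducts'' means $\mathbb{C}$-overlimits for $\mathbb{C}$ a small discrete category, so the left-to-right implication is immediate: if $(\mathcal{E},\mathtt{A})$ is overcomplete it has $\mathbb{C}$-overlimits for \emph{every} small $\mathbb{C}$, in particular for these two shapes. The content is therefore the converse.

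Assume $(\mathcal{E},\mathtt{A})$ has overequalizers and all small overproducts, fix a small category $\mathbb{C}$, and take $(F,B)\in(\mathcal{E}^{(\mathbb{C})},\mathtt{A})$. Since $\mathtt{A}F=\Delta(B)$, the functor $F$ factors through the fiber $\mathcal{E}_{B}$; in particular every $F(c)$ has arity $B$ and every $F(u)$ has arity $1_{B}$. First I would form the overproduct $P$ of the family $(F(c))_{c\in\mathbb{C}(0)}$, which comes with $\mathtt{A}(P)=B$ and projections $\pi_{c}\colon P\to F(c)$ of arity $1_{B}$, and likewise the overproduct $Q$ of the family $(F(\codomain u))_{u\in\mathbb{C}(1)}$, with $\mathtt{A}(Q)=B$ and projections $p_{u}\colon Q\to F(\codomain u)$ of arity $1_{B}$. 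Both $P$ and $Q$ sit over $B$, and the two families $(\pi_{\codomain u})_{u}$ and $(F(u)\circ\pi_{\domain u})_{u}$ of arrows out of $P$ have all components of arity $1_{B}$, so they induce morphisms $\alpha,\beta\colon P\to Q$ of arity $1_{B}$ with $p_{u}\alpha=\pi_{\codomain u}$ and $p_{u}\beta=F(u)\,\pi_{\domain u}$. As $\mathtt{A}(\alpha)=\mathtt{A}(\beta)=1_{B}$, the overequalizer hypothesis yields $e\colon L\to P$ with $\mathtt{A}(L)=B$ and $\mathtt{A}(e)=1_{B}$.

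It then remains to check that $\Delta(L)\xrightarrow{(\tau,1_{B})}(F,B)$, with $\tau_{c}:=\pi_{c}e$, is the universal overcone. That it is an overcone with $\mathtt{A}(L)=B$ follows from $\alpha e=\beta e$ (apply $p_{u}$ to obtain the naturality square $F(u)\tau_{c}=\tau_{c'}$) together with $\mathtt{A}(\tau_{c})=\mathtt{A}(\pi_{c})\mathtt{A}(e)=1_{B}$. For universality, given another overcone $(\sigma,b)\colon\Delta(y)\to(F,B)$ --- so the $\sigma_{c}\colon y\to F(c)$ all have the same arity $b\colon\mathtt{A}(y)\to B$ --- the universal property of $P$ gives a unique $h\colon y\to P$ with $\pi_{c}h=\sigma_{c}$ and $\mathtt{A}(h)=b$; naturality of $\sigma$ gives $p_{u}\alpha h=\sigma_{\codomain u}=p_{u}\beta h$ for all $u$, and since $\mathtt{A}(\alpha h)=b=\mathtt{A}(\beta h)$ the uniqueness clause of the universal property of $Q$ forces $\alpha h=\beta h$; the universal property of the overequalizer $e$ then produces a unique $f\colon y\to L$ with $ef=h$ and $\mathtt{A}(f)=b$, whence $\tau_{c}f=\sigma_{c}$. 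Uniqueness of $f$ follows by applying in succession the uniqueness clauses for $P$ and for $e$ (two such maps become equal after post-composition with $e$, hence are equal). Finally one checks that the construction degenerates correctly when $\mathbb{C}$ is empty or discrete, which is harmless since overproducts were assumed to include the empty and all small discrete ones. The single point that needs real care --- and the only place an error could slip in --- is the systematic check that every morphism built above carries the prescribed arity; once that is tracked, the argument is the classical one verbatim.
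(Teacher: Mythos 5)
Your proof is correct: it is the classical products-and-equalizers construction of limits, with the arity bookkeeping carried through, which is exactly what the paper intends when it lists this proposition among those it declares ``immediate'' and leaves without proof. The one point that genuinely needs care --- that the uniqueness clauses of the overproduct and of the overequalizer apply only to competing morphisms lying over the \emph{same} arrow $b$ of $\mathbb{G}$ --- is explicitly checked in your argument, so nothing is missing.
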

\begin{proposition}
  \label{surcompletsureqsurprobis}
  $(\mathcal{E},\mathtt{A})$ is overcocomplete iff
  $(\mathcal{E},\mathtt{A})$ has overcoequalizers and oversums.
\end{proposition}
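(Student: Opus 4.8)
The plan is to prove the two implications separately. One direction is immediate: if $(\mathcal{E},\mathtt{A})$ is overcocomplete then it has $\mathbb{C}$-overcolimits for every small $\mathbb{C}$, and taking $\mathbb{C}$ discrete (including empty) yields oversums while taking $\mathbb{C}$ the parallel-pair category yields overcoequalizers. So the content lies in the converse, which I would obtain by transcribing, with care for arities, the classical construction of an arbitrary colimit as a coequalizer of a pair of maps between coproducts.

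Suppose then that $(\mathcal{E},\mathtt{A})$ has oversums (understood to include the empty one, i.e.\ overinitial objects) and overcoequalizers, fix a small $\mathbb{C}$, and fix $(F,B)\in(\mathcal{E}^{(\mathbb{C})},\mathtt{A})$. Since $\mathtt{A}F=\Delta(B)$, the functor $F$ takes all its values — on objects and on morphisms — inside the fibre $\mathcal{E}_B$. As $\mathbb{C}$ is small, $\mathbb{C}(0)$ and $\mathbb{C}(1)$ are sets; form the oversum $(S,B)$ of the discrete family $\bigl(F(k)\bigr)_{k\in\mathbb{C}(0)}$ and the oversum $(T,B)$ of the discrete family $\bigl(F(\domain u)\bigr)_{u\in\mathbb{C}(1)}$. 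Both families genuinely are objects of the appropriate $\mathcal{E}^{(I)}$ because each $F(k)$ has arity $B$, so both oversums exist by hypothesis, with $\mathtt{A}(S)=\mathtt{A}(T)=B$ and all coprojections $\iota_k\colon F(k)\to S$ and $j_u\colon F(\domain u)\to T$ of arity $1_B$. Using the universal property of $(T,B)$ I would define $\phi,\psi\colon T\to S$ by $\phi j_u=\iota_{\domain u}$ and $\psi j_u=\iota_{\codomain u}\,F(u)$; both live in $\mathcal{E}_B$, so $\mathtt{A}(\phi)=\mathtt{A}(\psi)=1_B$, whence $(\phi,\psi)$ is an object of $(\mathcal{E}^{(\mathbb{C}_{\mathrm{par}})},\mathtt{A})$ and admits an overcoequalizer $q\colon S\to Q$ with $\mathtt{A}(Q)=B$ and $\mathtt{A}(q)=1_B$. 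The claim is that $(Q,B)$, together with the cocone $(q\iota_k)_k$, is the $\mathbb{C}$-overcolimit of $(F,B)$.

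The verification is then formally the classical one. First, $(q\iota_k)_k$ is an overcocone: naturality is $q\iota_{\codomain u}F(u)=q\psi j_u=q\phi j_u=q\iota_{\domain u}$, every leg has arity $1_B$, and $\mathtt{A}(Q)=B$. For universality, given an overcocone $(F,B)\xrightarrow{(\sigma,b)}\Delta(y)$, the family $(\sigma_k)$ induces $\bar\sigma\colon S\to y$ with $\bar\sigma\iota_k=\sigma_k$ and $\mathtt{A}(\bar\sigma)=b$; one checks $\bar\sigma\phi j_u=\sigma_{\domain u}=\bar\sigma\psi j_u$, so $\bar\sigma\phi=\bar\sigma\psi$ since two maps out of the oversum $T$ of the same arity agreeing on every $j_u$ coincide; hence $\bar\sigma$ factors through $q$ as $\bar\sigma=hq$ with $\mathtt{A}(h)=b$, and $h$ is the required comparison map ($h\,q\iota_k=\sigma_k$), unique by the analogous ``epimorphism'' clauses in the universal properties of $S$ and $Q$. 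The degenerate cases are subsumed: for $\mathbb{C}=\emptyset$ both $S$ and $T$ are the overinitial object over $B$ and $Q=S$, recovering the overinitial object; for $\mathbb{C}$ discrete $T$ is overinitial and $Q=S$.

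The only real labour in carrying this out is the arity bookkeeping — repeatedly confirming that $S$, $T$, $Q$ lie in the fibre $\mathcal{E}_B$, that the structural maps have arity $1_B$ so that the hypotheses ``oversum'' and ``overcoequalizer'' genuinely apply, and that the comparison maps carry the prescribed arity $b$ — together with phrasing the ``jointly epic'' behaviour of coproduct coprojections and the ``epic'' behaviour of the overcoequalizer projection as consequences of the uniqueness clauses, with matching arities, in the respective universal properties. I would also remark that the statement is nothing but the formal dual of \cref{surcompletsureqsurpro}: applying that proposition to the opposite overcategory $(\mathcal{E}^{\mathrm{op}},\mathtt{A}^{\mathrm{op}})$ over the base $\mathbb{G}^{\mathrm{op}}$, under which overlimits, overequalizers and overproducts turn respectively into overcolimits, overcoequalizers and oversums, gives \cref{surcompletsureqsurprobis} at once.
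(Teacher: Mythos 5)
Your proposal is correct and is exactly the argument the paper has in mind: the paper dismisses this proposition as ``immediate'', meaning the classical reduction of an arbitrary colimit to an overcoequalizer of two maps between oversums, which you carry out with the right arity bookkeeping (all of $S$, $T$, $Q$ and the structural maps living over $B$ and $1_B$, comparison maps over $b$). Your closing observation that the statement is the formal dual of \cref{surcompletsureqsurpro} via $(\mathcal{E}^{\mathrm{op}},\mathtt{A}^{\mathrm{op}})$ over $\mathbb{G}^{\mathrm{op}}$ is also valid and is the shortest complete justification.
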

\subsection{Some results of over(co)completeness of overalgebras}
\label{Someresultsofsurcompletudeaboutsuralgebras}
The following propositions are very similar to the classical ones and so do not require detailed
proof.
\begin{proposition}
  \label{surcompletude_des_algebres}
  Let $T$ be an overmonad on $(\mathcal{E},\mathtt{A})$. In this case:
  \[(\mathcal{E},\mathtt{A})\hspace{.1cm}\text{is
    overcomplete}\Longrightarrow(\mathcal{E}^{T},\mathtt{A})
  \hspace{.1cm}\text{is overcomplete}\]
\end{proposition}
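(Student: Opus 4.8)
The plan is to mimic the classical proof that the Eilenberg--Moore category $\mathcal{E}^T$ is complete whenever $\mathcal{E}$ is, being careful at each step that all the limits, forgetful functors and the monad respect the fibration over $\mathbb{G}$. By \cref{surcompletsureqsurpro} it suffices to show that $(\mathcal{E}^T,\mathtt{A})$ has overequalizers and overproducts, so I would split the argument into those two cases. First I would recall that the forgetful overfunctor $(\mathcal{E}^T,\mathtt{A})\xrightarrow{U}(\mathcal{E},\mathtt{A})$ strictly commutes with the arity functors, so a cone in $\mathcal{E}^{(\mathbb{C})}$ over a diagram of $T$-algebras projects to a cone of the same arity downstairs; the whole point is to lift the universal overcone from $(\mathcal{E},\mathtt{A})$ back to $(\mathcal{E}^T,\mathtt{A})$ and check that the arity bookkeeping (the condition $\mathtt{A}(x)=B$ and $\mathtt{A}(\tau)=\Delta(1_B)$) is automatically satisfied.

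For overequalizers: given $\xymatrix{(a,\xi_a)\ar[r]<+2pt>^{f}\ar[r]<-2pt>_{g}&(b,\xi_b)}$ in $\mathcal{E}^T$ with $\mathtt{A}(f)=\mathtt{A}(g)$, form the overequalizer $c\xrightarrow{e}a$ in $(\mathcal{E},\mathtt{A})$ with $\mathtt{A}(e)=\mathtt{A}(1_a)$ --- here I am using that overcompleteness of $(\mathcal{E},\mathtt{A})$ gives overequalizers by \cref{surcompletsureqsurpro} (an overequalizer being the arity-constrained equalizer in the fiber $\mathcal{E}_{\mathtt{A}(a)}$). Since $T$, being an overmonad, preserves fibers, $Te$ is again an equalizer in that same fiber, so $\xi_a\circ Te$ factors uniquely through $e$ as some $\xi_c\colon Tc\to c$, and the usual diagram chase shows $(c,\xi_c)$ is a $T$-algebra and $e$ an algebra map; its arity is still $\mathtt{A}(1_a)$, so it is an overequalizer in $(\mathcal{E}^T,\mathtt{A})$. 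The universal property transfers because any competing overcone of algebras is in particular one in $(\mathcal{E},\mathtt{A})$ and the unique mediating map downstairs is automatically a $T$-algebra morphism by the same fiberwise-equalizer argument.

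For overproducts: given a family $((a_i,\xi_i))_{i\in I}$ of $T$-algebras all lying over the same object $B=\mathtt{A}(a_i)$ (this is what an overproduct diagram, i.e.\ an object of $\mathcal{E}^{(\mathbb{C})}$ with $\mathbb{C}$ discrete, prescribes), take the overproduct $\prod_i a_i$ in $(\mathcal{E},\mathtt{A})$, which lies over $B$; using that $T$ preserves the fiber $\mathcal{E}_B$ one equips it with the pointwise algebra structure $T(\prod a_i)\to \prod Ta_i \xrightarrow{\prod \xi_i}\prod a_i$ and checks the algebra axioms and the universal property exactly as classically, all arrows staying in $\mathcal{E}_B$. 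Combining the two cases with \cref{surcompletsureqsurpro} applied to $(\mathcal{E}^T,\mathtt{A})$ yields the claim. The only genuinely delicate point --- and the step I would write out most carefully --- is verifying that an overmonad really does restrict to an endofunctor of each fiber $\mathcal{E}_B$ and preserves the relevant fiberwise (co)limits, since everything else is then a verbatim transcription of the classical Eilenberg--Moore completeness proof carried out inside a single fiber; once that is granted, there is no real obstacle, which is why the paper flags this proposition as "very similar to the classical one".
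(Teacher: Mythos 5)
Your overall strategy is exactly the one the paper intends: the paper gives no proof, remarking only that the argument is ``very similar to the classical one'', and your plan --- reduce to overequalizers and overproducts via \cref{surcompletsureqsurpro}, then show the forgetful overfunctor $(\mathcal{E}^{T},\mathtt{A})\to(\mathcal{E},\mathtt{A})$ creates these overlimits, with the arity bookkeeping kept fiberwise --- is the correct transcription of the Eilenberg--Moore completeness proof. Your identification of the one point that genuinely needs checking (that an overmonad, being an overfunctor with overnatural unit and multiplication, restricts to each fiber $\mathcal{E}_{B}$) is also the right place to focus.

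One sentence in the overequalizer step is wrong as stated, though the step it is meant to justify still holds. You write that since $T$ preserves fibers, $Te$ is again an equalizer in that fiber, and deduce from this that $\xi_{a}\circ Te$ factors through $e$. Preserving fibers does not make $T$ preserve equalizers (no such preservation hypothesis appears in the proposition, and none is needed), and in any case the factorization of $\xi_{a}\circ Te$ through $e$ uses the universal property of $e$, not of $Te$. The correct argument is the classical one: since $f$ and $g$ are algebra morphisms, $f\,\xi_{a}\,Te=\xi_{b}\,T(fe)=\xi_{b}\,T(ge)=g\,\xi_{a}\,Te$, so $\xi_{a}\circ Te$ equalizes $f$ and $g$ and hence factors uniquely through the equalizer $e$; all of these arrows lie in the fiber $\mathcal{E}_{\mathtt{A}(a)}$ because $T$ is an overfunctor, so the arity condition $\mathtt{A}(e)=\mathtt{A}(1_{a})$ is preserved. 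With that repair (and the analogous remark that the overproduct case needs only the canonical comparison $T(\prod_{i}a_{i})\to\prod_{i}Ta_{i}$ supplied by the universal property of the product, not preservation of products by $T$), your proof is complete and coincides with the intended one.
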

\begin{proposition}
  \label{K-egalisateurs_dans_les_algebres}
  Let $T$ be an overmonad on $(\mathcal{E},\mathtt{A})$. In this case:
  \[(\mathcal{E},\mathtt{A})\hspace{.1cm}\text{has}\hspace{.1cm}
  K\text{-equalizers}\Longrightarrow(\mathcal{E}^{T},\mathtt{A})\hspace{.1cm}\text{has}
  \hspace{.1cm}K\text{-equalizers}\]
\end{proposition}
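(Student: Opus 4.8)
\emph{Proof idea.} The plan is to show, exactly as in the classical situation, that the forgetful overfunctor $U\colon(\mathcal{E}^{T},\mathtt{A})\to(\mathcal{E},\mathtt{A})$ creates $K$-equalizers. The point that makes this painless is that the arity of a morphism of $\mathcal{E}^{T}$ is by definition the arity of its underlying morphism in $\mathcal{E}$, so the hypothesis of the $K$-equalizer clause transfers verbatim from $\mathcal{E}^{T}$ to $\mathcal{E}$, and conversely the arity constraint on the equalizer is inherited.

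First I would start from a parallel pair $f,g\colon(A,\alpha)\rightrightarrows(B,\beta)$ in $\mathcal{E}^{T}$ with $\mathtt{A}(f)=\mathtt{A}(g)$. By construction of $(\mathcal{E}^{T},\mathtt{A})$ this says precisely that the underlying pair $f,g\colon A\rightrightarrows B$ of $\mathcal{E}$ has equal arities, so by hypothesis it admits an equalizer $e\colon C\to A$ in $\mathcal{E}$ with $\mathtt{A}(e)=\mathtt{A}(1_{A})$; in particular $\mathtt{A}(C)=\mathtt{A}(A)$, and $e$ is a monomorphism.

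Next I would lift the algebra structure along $e$ by the standard argument. Since $T$ is an overfunctor, $T(f)T(e)=T(fe)=T(ge)=T(g)T(e)$, hence
\[
f\circ(\alpha\circ T(e))=\beta\circ T(f)\circ T(e)=\beta\circ T(g)\circ T(e)=g\circ(\alpha\circ T(e)),
\]
so $\alpha\circ T(e)$ factors uniquely through $e$ as $e\circ\gamma$ for a unique $\gamma\colon T(C)\to C$. Monicity of $e$ together with the $T$-algebra axioms for $\alpha$ force $\gamma\circ\eta_{C}=1_{C}$ and $\gamma\circ T(\gamma)=\gamma\circ\mu_{C}$, so $(C,\gamma)$ is a $T$-algebra and $e\colon(C,\gamma)\to(A,\alpha)$ is an algebra morphism. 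Because $\eta$ is a $2$-cell of $\C/\mathbb{G}$ we have $\mathtt{A}(\eta_{C})=1_{\mathtt{A}(C)}$, and then the equation $e\gamma=\alpha T(e)$ together with $\mathtt{A}(e)=\mathtt{A}(1_A)$ gives $\mathtt{A}(\gamma)=1_{\mathtt{A}(C)}$, so $(C,\gamma)$ genuinely lives in the fiber $\mathcal{E}_{\mathtt{A}(A)}$, as the overalgebra terminology demands.

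Finally I would check the universal property in $\mathcal{E}^{T}$: given an algebra morphism $h\colon(D,\delta)\to(A,\alpha)$ with $fh=gh$, the unique $\mathcal{E}$-morphism $k\colon D\to C$ with $ek=h$ is automatically a $T$-algebra morphism (once more by monicity of $e$ and the equation $e\gamma=\alpha T(e)$), and it is the unique algebra morphism over $h$. Since arities in $\mathcal{E}^{T}$ are those of the underlying $\mathcal{E}$-morphisms, $\mathtt{A}(e)=\mathtt{A}(1_{A})=\mathtt{A}(1_{(A,\alpha)})$, so $e$ is a $K$-equalizer of $f,g$ in $(\mathcal{E}^{T},\mathtt{A})$. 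There is essentially no new difficulty compared with the classical proof; the only thing to watch is the arity bookkeeping — that $\mathtt{A}(f)=\mathtt{A}(g)$ passes to the underlying pair and that $e$ and $\gamma$ meet the fiberwise constraints — all of which is immediate from $T$, $\eta$, $\mu$ being over-data and from the $K$-equalizer clause already available in $\mathcal{E}$.
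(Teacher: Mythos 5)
Your proposal is correct and follows exactly the route the paper intends: the paper omits the proof as ``very similar to the classical one,'' namely that the forgetful overfunctor from $(\mathcal{E}^{T},\mathtt{A})$ creates the required equalizers, which is precisely your argument. Your added arity bookkeeping (that $\mathtt{A}(f)=\mathtt{A}(g)$ descends to the underlying pair, that $\mathtt{A}(\gamma)$ is an identity so $(C,\gamma)$ is a genuine overalgebra, and that $\mathtt{A}(e)=\mathtt{A}(1_{(A,\alpha)})$) is exactly the only non-classical content and is handled correctly.
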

\begin{proposition}
  \label{coegalisateurs_entraine_cocompletude}
  Let $T$ be an overmonad on $(\mathcal{E},\mathtt{A})$. Suppose that
  $(\mathcal{E},\mathtt{A})$ is overcocomplete. In this case:
  \[(\mathcal{E}^{T},\mathtt{A})\hspace{.1cm}\text{has
    overcoequalizers}\Longleftrightarrow
  (\mathcal{E}^{T},\mathtt{A})\hspace{.1cm}\text{is overcocomplete} \]
\end{proposition}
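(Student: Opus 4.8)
The plan is to establish the two implications separately. The implication ``$(\mathcal{E}^{T},\mathtt{A})$ is overcocomplete $\Longrightarrow$ $(\mathcal{E}^{T},\mathtt{A})$ has overcoequalizers'' is immediate, since an overcoequalizer is nothing but a $\mathbb{C}$-overcolimit for $\mathbb{C}$ the (connected, nonempty) category with two objects and two parallel non-identity arrows. So the content is in the converse, and for it I would invoke \cref{surcompletsureqsurprobis}: it suffices to show that, granting that $(\mathcal{E}^{T},\mathtt{A})$ has overcoequalizers, it also has oversums, for then \cref{surcompletsureqsurprobis} promotes ``oversums $+$ overcoequalizers'' to full overcocompleteness.

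To build oversums in $(\mathcal{E}^{T},\mathtt{A})$ I would transcribe Linton's classical construction into the overcategorical setting. Write $U\colon(\mathcal{E}^{T},\mathtt{A})\to(\mathcal{E},\mathtt{A})$ for the forgetful overfunctor and $F$ for its left overadjoint; since $(F,U)$ is a pair of adjoint overfunctors in the $\C$-enriched $2$-category $\C/\mathbb{G}$, the overfunctor $F$ preserves every overcolimit, in particular every oversum, exactly as in the classical case (by composing the relevant adjunctions and universal properties in $\C/\mathbb{G}$); and these oversums exist in $(\mathcal{E},\mathtt{A})$ because $(\mathcal{E},\mathtt{A})$ is overcocomplete. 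Now take a family $(x_{i})_{i\in I}$ of overalgebras all lying over a fixed $B\in\mathbb{G}$ — which is precisely what an object of $(\mathcal{E}^{T})^{(\mathbb{C})}$ amounts to when $\mathbb{C}$ is discrete — with underlying objects $a_{i}=Ux_{i}$ in the fibre $\mathcal{E}_{B}$. By \cref{suralgebrasourcoequalizers} each $x_{i}$ is the overcoequalizer of the canonical pair $FTa_{i}\rightrightarrows Fa_{i}$. Applying $F$ to the oversums of the $a_{i}$ and of the $Ta_{i}$ in $(\mathcal{E},\mathtt{A})$ yields oversums $\coprod_{i}Fa_{i}\cong F\bigl(\coprod_{i}a_{i}\bigr)$ and $\coprod_{i}FTa_{i}\cong F\bigl(\coprod_{i}Ta_{i}\bigr)$ in $(\mathcal{E}^{T},\mathtt{A})$; these assemble into a parallel pair $\coprod_{i}FTa_{i}\rightrightarrows\coprod_{i}Fa_{i}$ whose two legs have arity $1_{B}$, and I would define the candidate oversum $\coprod_{i}x_{i}$ to be the overcoequalizer of this pair, which exists by hypothesis.

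It then remains to check that this overcoequalizer genuinely is the oversum of the $x_{i}$ in $(\mathcal{E}^{T},\mathtt{A})$: one composes the coequalizing overalgebra maps with the coprojections $Fa_{i}\to\coprod_{i}Fa_{i}$ to obtain a universal overcocone $(x_{i})\to\coprod_{i}x_{i}$ of arity $1_{B}$, and one verifies that the resulting object has arity $B$. This is where the (routine) bookkeeping concentrates: one must juggle the universal properties of the overcoequalizer, of the oversums in $(\mathcal{E},\mathtt{A})$, and of the over-adjunction $F\dashv U$ simultaneously, and check that every second component stays equal to an identity of $B$, so that the whole construction takes place fibrewise inside $\mathcal{E}_{B}$. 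Once the arities are seen to be forced, the verification of the universal property is word-for-word the classical Barr--Wells/Linton argument, which — in the spirit of the remark preceding this subsection — is why no more detail is needed. The main obstacle is thus not conceptual but clerical: ensuring the arity data are coherently carried through each step so that the classical colimit computations actually live over $\mathbb{G}$.
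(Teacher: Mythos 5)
Your proof is correct and follows exactly the route the paper intends: the paper omits the argument, saying only that it is ``very similar to the classical one'', and that classical argument is precisely the Linton-style construction you give --- reduce via \cref{surcompletsureqsurprobis} to producing oversums, then realize the oversum of a fibrewise family of overalgebras as the overcoequalizer of the induced parallel pair between free overalgebras on oversums taken in $(\mathcal{E},\mathtt{A})$, using \cref{suralgebrasourcoequalizers} and the fact that the left overadjoint $F$ preserves overcolimits. The arity bookkeeping you flag is indeed the only genuinely overcategorical addition, and it goes through as you describe since all the structure maps and counit components lie over identities of $B$.
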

\subsection{Freyd's Adjoint Theorem in the overcategorical context}
\label{FreydTheoremintheSurcategoricalcontext}
As we are going to see, Freyd's Adjoint Theorem remains true in the
context of $\C/\mathbb{G}$. We call it "Freyd's Overadjoint Theorem" to
refer to its overcategorical nature.  Freyd's Overadjoint Theorem can be used for example for
the proof of the \cref{SurBarrWells} which allows us to
prove some overcocompleteness results. But as we will demonstrate, unlike "Beck's
Theorem in the overcategorical context" (see
\cref{BeckTheoremintheSurcategoricalcontext}), Freyd's overadjoint
theorem requires in addition $K$-equalizers (see \cref{surfreyd}).
 
Let $(\mathcal{A},\mathtt{A})\xrightarrow{F}(\mathcal{B},\mathtt{A})$
be an overfunctor and $B\in(\mathcal{B},\mathtt{A})$.  An object of the
comma category $(B\downarrow F)$ is given by a couple $(A,a)$ 
consisting of an object $A$ of $\mathcal{A}$ and 
 to a morphism $B\xrightarrow{a}F(A)$ in $\mathcal{B}$, and
a morphism of $(B\downarrow F)$ is given by an arrow
$(A,a)\xrightarrow{f}(A',a')$ such that $F(f)a=a'$.
 
The comma category $(B\downarrow F)$ is an overcategory. Indeed we have
the arity functor $(B\downarrow
F)\xrightarrow{\mathtt{A}}\mathtt{A}(B)/\mathbb{G}$ defined on the
objects as: $(A,a)\longmapsto\mathtt{A}(a)$ and defined on the
morphism as: $f\longmapsto\mathtt{A}(f)$ ($\mathtt{A}$ is here the
arity functor of the overcategory $(\mathcal{B},\mathtt{A})$).
 
Furthermore we have the following canonical morphism of overcategories,
given by the first projection
\[\xymatrix{(B\downarrow F)\ar[d]_{\mathtt{A}}\ar[rr]^{Q}&&\mathcal{A}
  \ar[d]^{\mathtt{A}}\\
  \mathtt{A}(B)/\mathbb{G}\ar[rr]^{Q_{0}}&&\mathbb{G}}\]
\begin{proposition}
  \label{surcompletudecomma}
  Let
  $\xymatrix{(\mathcal{A},\mathtt{A})\ar[r]^{G}&(\mathcal{X},\mathtt{A})}$
  be a overfunctor such that $(\mathcal{A},\mathtt{A})$ is overcomplete and
  has $K$-equalizers. We suppose that $G$ preserves overlimits and
  $K$-equalizers. Then $\forall B\in\mathcal{X}$, the comma
  overcategory $((B\downarrow G),\mathtt{A})$ is overcomplete and has
  $K$-equalizers.
\end{proposition}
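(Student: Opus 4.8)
The plan is to transcribe, with arities added, the classical argument that a comma category $(B\downarrow G)$ inherits (co)completeness from its domain whenever $G$ preserves the relevant limits: one forms the would-be universal object inside $\mathcal{A}$ along the projection overfunctor $Q$, and then manufactures the extra datum, a morphism out of $B$, from the matching universal property in $(\mathcal{X},\mathtt{A})$, using that $G$ preserves it. By \cref{surcompletsureqsurpro} it is enough to produce overproducts and overequalizers in $((B\downarrow G),\mathtt{A})$; since the construction does not see the shape of the diagram I carry it out for an arbitrary small category $\mathbb{C}$, and then separately for $K$-equalizers.

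First I would fix a small $\mathbb{C}$ and an object $(D,\beta)$ of $((B\downarrow G)^{(\mathbb{C})},\mathtt{A})$. Writing $D(c)=(D_{1}c,d_{c})$ with $d_{c}\colon B\to G(D_{1}c)$, the constancy condition $\mathtt{A}D=\Delta(\beta)$ forces $\mathtt{A}(D_{1}c)=E:=\codomain(\beta)$ for all $c$ together with identity arities on the arrows of $D$, so $QD$ and $E$ form an object of $(\mathcal{A}^{(\mathbb{C})},\mathtt{A})$. Overcompleteness of $(\mathcal{A},\mathtt{A})$ provides a universal overcone $\Delta(L)\xrightarrow{(\lambda,1_{E})}(QD,E)$ with $\mathtt{A}(L)=E$, and since $G$ preserves overlimits, $\Delta(GL)\xrightarrow{(G\lambda,1_{E})}(GQD,E)$ is again a universal overcone, now in $(\mathcal{X},\mathtt{A})$. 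The arrows $d_{c}$ assemble into an overcone $\Delta(B)\xrightarrow{(d,\beta)}(GQD,E)$ — naturality is precisely the equation saying the arrows of $D$ are morphisms of $(B\downarrow G)$, and the arity part is the constancy condition. Its comparison morphism is a unique $\ell\colon B\to GL$ with $G(\lambda_{c})\ell=d_{c}$ for all $c$ and $\mathtt{A}(\ell)=\beta$; hence $(L,\ell)\in(B\downarrow G)$ with $\mathtt{A}_{(B\downarrow G)}(L,\ell)=\beta$, and each $\lambda_{c}$ lifts to an arrow $(L,\ell)\to(D_{1}c,d_{c})$, giving an overcone $\Delta(L,\ell)\xrightarrow{(\tilde\lambda,1_{\beta})}(D,\beta)$ of exactly the shape demanded by the definition of $\mathbb{C}$-overlimit. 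For universality I would take a competing overcone $\Delta(M,m)\xrightarrow{(\sigma,b)}(D,\beta)$, project it through $Q$, and use universality in $\mathcal{A}$ to get a unique $f\colon M\to L$ with $\lambda_{c}f=\sigma_{c}$ and the prescribed arity; that $f$ underlies a morphism $(M,m)\to(L,\ell)$, i.e. $G(f)m=\ell$, falls out of comparing $G(\lambda_{c})G(f)m=G(\sigma_{c})m=d_{c}=G(\lambda_{c})\ell$ and the uniqueness clause of the universal property of $\Delta(GL)$, and uniqueness of the lift is free because $Q$ is faithful. This makes $(L,\ell)$ the $\mathbb{C}$-overlimit; applied to discrete and to parallel-pair shapes it gives overproducts and overequalizers, so by \cref{surcompletsureqsurpro} $(B\downarrow G)$ is overcomplete.

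For $K$-equalizers, given $f,g\colon(A,a)\rightrightarrows(A',a')$ in $(B\downarrow G)$ with equal arity, applying $Q_{0}$ to the projected pair yields $\mathtt{A}(f)=\mathtt{A}(g)$ in $\mathcal{A}$, so it has a $K$-equalizer $e\colon C\to A$ in $\mathcal{A}$ with $\mathtt{A}(e)=1_{\mathtt{A}(A)}$. Since $G$ preserves $K$-equalizers, $G(e)$ is the equalizer of $G(f),G(g)$; from $G(f)a=a'=G(g)a$ the morphism $a$ factors uniquely as $a=G(e)c$, a one-line arity computation (using $\mathtt{A}(e)=1$) gives $\mathtt{A}(c)=\mathtt{A}(a)$, so $(C,c)\in(B\downarrow G)$ and $\mathtt{A}_{(B\downarrow G)}(e)$ is the identity of $\mathtt{A}_{(B\downarrow G)}(C,c)$. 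The standard verification — any $h$ with $fh=gh$ factors through $e$ in $\mathcal{A}$, and $G(e)$ monic forces that factorization to lift to $(B\downarrow G)$ — then shows that $(C,c)$ with $e$ is a $K$-equalizer of $f,g$.

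I expect the only genuine work, and the step to watch, to be the arity bookkeeping, made slightly delicate by the fact that the base of $((B\downarrow G),\mathtt{A})$ is the coslice $\mathtt{A}(B)/\mathbb{G}$ rather than $\mathbb{G}$: one has to check that overcones and universal overcones transport correctly along the commuting square relating $Q\colon(B\downarrow G)\to\mathcal{A}$ to $Q_{0}\colon\mathtt{A}(B)/\mathbb{G}\to\mathbb{G}$, and that the side conditions "$\mathtt{A}(x)=B$" (in the definition of over(co)limit) and "$\mathtt{A}(e)=\mathtt{A}(1_{a})$" (in the definition of $K$-equalizer) survive the transfer. The underlying categorical content is a verbatim copy of the classical comma-category argument.
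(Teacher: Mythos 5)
Your proposal is correct and follows the same route as the paper: the paper's proof likewise reduces the statement to showing that the projection $Q\colon((B\downarrow G),\mathtt{A})\to(\mathcal{A},\mathtt{A})$ creates small overproducts, overequalizers, and $K$-equalizers, citing the classical comma-category argument of Mac Lane, which is exactly what you carry out (with the arity bookkeeping the paper leaves implicit). The only cosmetic difference is that the paper states the construction as a creation property of $Q$ for the shapes $J$ discrete and $J=\downdownarrows$, while you run the same construction for an arbitrary small $\mathbb{C}$ before specializing.
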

\begin{proof}
  It is enough to prove that the functor $((B\downarrow
  G),\mathtt{A})\xrightarrow{Q}(\mathcal{A},\mathtt{A})$ creates small
  overproducts, overequalizers, and $K$-equalizers. First we consider
  all functors $J\xrightarrow{F}(B\downarrow G)$ such that
  $F\in(B\downarrow G)^{(J)}$. Thus $QF\in A^{(J)}$ and if $J$ is a
  small discret category, then $lim QF$ exists
  because 
  $(\mathcal{A},\mathtt{A})$ is overcomplete.  It is easy to prove (as
  in~\citep[]{maclane:1998}) that $lim F$ exists and
  that 
  it is unique such that $Q(\lim F)=\lim QF$. If $J=\downdownarrows$, we
  use a similar argument to prove that $Q$ creates overequalizers.
  
  To prove that $Q$ creates $K$-equalizers we use a similar argument,
  but we must take $J=\downdownarrows$ and $F$ such that the image of
  the functor $\mathtt{A}F$ is a fixed arrow in
  $\mathtt{A}(B)/\mathbb{G}$.
\end{proof}
Let $(\mathcal{D},\mathtt{A})$ be an overcategory and let $G\in
\mathbb{G}$.  The object $0_{G}\in \mathcal{D}_{G}$ is overinitial if
for all objects $d\in \mathcal{D}$, and for all
$G\xrightarrow{b}\mathtt{A}(d)$ in $\mathbb{G}(1)$, there is a unique
morphism $0_{G}\xrightarrow{x}d$ of $\mathcal{D}$ over $b$.
\begin{proposition}
  \label{miracle}
  Let
  $(\mathcal{A},\mathtt{A})\xrightarrow{F}(\mathcal{B},\mathtt{A})$ be
  an overfunctor, $B\in(\mathcal{B},\mathtt{A})$, and $(R_{B},v)$ be an object
  of $((B\downarrow F),\mathtt{A})$ such that
  $\mathtt{A}(v)=1_{\mathtt{A}(B)}$. In this case:
  \[(R_{B},v)\hspace{.1cm}\text{is overinitial in }\hspace{.1cm}((B\downarrow
  F),\mathtt{A}) \Longleftrightarrow v \hspace{.1cm}\text{is initial
    in}\hspace{.1cm}(B\downarrow F)\]
\end{proposition}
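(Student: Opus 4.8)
The statement to prove, \cref{miracle}, is an equivalence: with $(R_B,v)$ an object of $((B\downarrow F),\mathtt{A})$ satisfying $\mathtt{A}(v)=1_{\mathtt{A}(B)}$, the object $(R_B,v)$ is overinitial in $((B\downarrow F),\mathtt{A})$ if and only if $v$ is initial in the ordinary comma category $(B\downarrow F)$. So I need to prove both directions of the equivalence, and the key input is the definition of overinitial object recalled just before the statement, together with the description of the arity functor on the comma category $(B\downarrow F)\xrightarrow{\mathtt{A}}\mathtt{A}(B)/\mathbb{G}$ given in the preceding paragraphs.

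Let me think about what "overinitial" unpacks to here. An overinitial object $0_G$ in an overcategory $(\mathcal{D},\mathtt{A})$, with $G\in\mathbb{G}$, is an object of the fiber $\mathcal{D}_G$ such that for every $d\in\mathcal{D}$ and every $G\xrightarrow{b}\mathtt{A}(d)$ in $\mathbb{G}$, there is a unique $0_G\xrightarrow{x} d$ over $b$. Now for the overcategory $((B\downarrow F),\mathtt{A})$, the base category is $\mathtt{A}(B)/\mathbb{G}$, and the arity functor sends $(A,a)\mapsto \mathtt{A}(a)$ — where $\mathtt{A}(a)$ is to be read as the object $\mathtt{A}(B)\xrightarrow{\mathtt{A}(a)}\mathtt{A}(F(A))$ of $\mathtt{A}(B)/\mathbb{G}$. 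The condition $\mathtt{A}(v)=1_{\mathtt{A}(B)}$ says precisely that $(R_B,v)$ lies in the fiber over the terminal... no, over the *initial* object $1_{\mathtt{A}(B)}$ of $\mathtt{A}(B)/\mathbb{G}$. That's the point: the identity $1_{\mathtt{A}(B)}$ is the initial object of the slice $\mathtt{A}(B)/\mathbb{G}$.

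**The two directions.**

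The plan is as follows. For the direction ($\Rightarrow$): suppose $(R_B,v)$ is overinitial. I must show $v$ is initial in $(B\downarrow F)$, i.e. for any object $(A,a)$ there is a unique morphism $v\to (A,a)$. Given $(A,a)$, set $d = (A,a)$ and consider the morphism $b$ in the base category $\mathtt{A}(B)/\mathbb{G}$ from $\mathtt{A}(v)=1_{\mathtt{A}(B)}$ to $\mathtt{A}(a)$; since $1_{\mathtt{A}(B)}$ is initial in the slice $\mathtt{A}(B)/\mathbb{G}$, there is exactly one such $b$, namely $\mathtt{A}(a)$ itself viewed as a morphism $1_{\mathtt{A}(B)}\to \mathtt{A}(a)$ in the slice. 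Overinitiality then yields a unique morphism $(R_B,v)\xrightarrow{x}(A,a)$ over this unique $b$; but "over $b$" is automatic since $b$ is the only choice, so this is just a unique morphism in $(B\downarrow F)$, which is what initiality demands. For the direction ($\Leftarrow$): suppose $v$ is initial in $(B\downarrow F)$. Given any $d=(A,a)$ and any $G'\xrightarrow{b}\mathtt{A}(d)$ with $G' = \mathtt{A}(R_B,v)$ — but $\mathtt{A}(R_B,v)=\mathtt{A}(v)=1_{\mathtt{A}(B)}$, so $b$ is forced to be the unique arrow out of the initial object $1_{\mathtt{A}(B)}$ — I need a unique $(R_B,v)\xrightarrow{x}(A,a)$ over $b$. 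Initiality of $v$ gives a unique morphism $x$ in $(B\downarrow F)$ full stop, and it automatically lies over the unique $b$; so existence and uniqueness of the lift follow immediately.

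**The main obstacle.**

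I expect the only real subtlety — and it is a small one — is checking carefully that $1_{\mathtt{A}(B)}$ is the initial object of the comma/slice category $\mathtt{A}(B)/\mathbb{G}$, and that the arity functor $\mathtt{A}$ on $(B\downarrow F)$ sends $v$ to this initial object precisely because of the hypothesis $\mathtt{A}(v)=1_{\mathtt{A}(B)}$. Once that is in place, both directions collapse to the trivial observation that "over the unique arrow out of an initial object" imposes no extra constraint, so overinitiality in $((B\downarrow F),\mathtt{A})$ relative to the fiber over $1_{\mathtt{A}(B)}$ and ordinary initiality in $(B\downarrow F)$ say literally the same thing. I would also make explicit that a morphism $(R_B,v)\to (A,a)$ of $(B\downarrow F)$ has, by construction of $\mathtt{A}$, arity equal to $\mathtt{A}$ of that morphism in $\mathtt{A}(B)/\mathbb{G}$, whose domain is forced, so no morphism is lost or gained when we pass between the two universal properties. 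There is no hard analytic or combinatorial content; the proof is essentially a bookkeeping argument about slices, and I would keep it to a few lines.
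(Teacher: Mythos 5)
Your proof is correct, and the paper itself gives no proof of \cref{miracle} (it is stated as immediate), so your unpacking is exactly the intended argument: since $\mathtt{A}(v)=1_{\mathtt{A}(B)}$ is the initial object of the base $\mathtt{A}(B)/\mathbb{G}$, every object $(A,a)$ admits a unique arrow $b$ out of it in the base, every morphism $(R_B,v)\to(A,a)$ automatically lies over that $b$, and the two universal properties coincide. Your verification that $1_{\mathtt{A}(B)}$ is initial in the coslice and that the ``over $b$'' clause is therefore vacuous is precisely the bookkeeping the paper leaves to the reader.
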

\begin{lemma}[Lemma of the overinitial object]
  \label{surinitialobjet}
  Let $(\mathcal{D},\mathtt{A})$ a overcategory overcomplete with
  $K$-equalizers, and let $G\in\mathbb{G}$.
  
  In this case we have the following equivalence
  \begin{center}
    \begin{minipage}[c]{0.4\linewidth}
      $(\mathcal{D},\mathtt{A})$ has an overinitial object in one fiber
      $\mathcal{D}_{G}$
    \end{minipage}
    \quad$\Longleftrightarrow$\quad
    \begin{minipage}[c]{0.4\linewidth}
      There is a set $I$ and a family of objects
      $k_{i}\in\mathcal{D}_{G}$ ($i\in I$) such that $\forall d$ in
      $(\mathcal{D},\mathtt{A})$, $\forall
      G\xrightarrow{h}\mathtt{A}(d)$ in $\mathbb{G}$, there is an
      $i\in I$, and there is a morphism $k_{i}\xrightarrow{}d$ in
      $\mathcal{D}$ over $h$ (via the arity functor).
    \end{minipage}
  \end{center}
\end{lemma}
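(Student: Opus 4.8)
The plan is to transpose Freyd's classical \emph{Initial Object Theorem} into the overcategorical setting, the only genuinely new phenomenon being the intervention of $K$-equalizers. The implication from left to right is trivial: if $0_{G}\in\mathcal{D}_{G}$ is overinitial, take $I$ a one-point set and $k_{0}=0_{G}$; then for every $d$ and every $G\xrightarrow{h}\mathtt{A}(d)$ the very definition of overinitial object provides a (unique, in particular at least one) morphism $0_{G}\to d$ over $h$. For the converse, assume we are given such a solution family $\{k_{i}\}_{i\in I}\subseteq\mathcal{D}_{G}$. Since all the $k_{i}$ lie in the single fiber $\mathcal{D}_{G}$, the pair $(i\mapsto k_{i},\,G)$ is an object of $\mathcal{D}^{(I)}$, and overcompleteness furnishes its $I$-overlimit: an object $P\in\mathcal{D}_{G}$ with projections $\pi_{i}\colon P\to k_{i}$ over $1_{G}$, enjoying the universal property that any overcone $\Delta(y)\xrightarrow{(\sigma,b)}(k_{\bullet},G)$ factors through a unique morphism $y\to P$ over $b$. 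I first record that $P$ is \emph{weakly overinitial at $G$}: for every $d\in\mathcal{D}$ and every $G\xrightarrow{h}\mathtt{A}(d)$ there is at least one morphism $P\to d$ over $h$; indeed the solution-set hypothesis yields some $i$ and a morphism $k_{i}\xrightarrow{g}d$ over $h$, and then $g\pi_{i}\colon P\to d$ has arity $h\cdot 1_{G}=h$.

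Next I build the overcategorical analogue of the ``joint equalizer of all endomorphisms of $P$''. Let $J$ be the set of all endomorphisms $\phi\colon P\to P$ with $\mathtt{A}(\phi)=1_{G}$ (this is a set, $\mathcal{D}$ being locally small). By overcompleteness form the overproduct $\prod^{J}P$ of $J$-many copies of $P$, which again lies in $\mathcal{D}_{G}$; the diagonal $\delta\colon P\to\prod^{J}P$ and the map $\langle\phi\rangle_{\phi\in J}\colon P\to\prod^{J}P$ are both over $1_{G}$, so by hypothesis they admit a $K$-equalizer $e\colon E\to P$ with $\mathtt{A}(e)=\mathtt{A}(1_{P})=1_{G}$. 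Thus $E\in\mathcal{D}_{G}$, the morphism $e$ is over $1_{G}$, it is a monomorphism of $\mathcal{D}$ (being an equalizer), and $\phi e=e$ for every $\phi\in J$.

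I then claim that $E=0_{G}$ is overinitial in $\mathcal{D}_{G}$. For existence, given $d$ and $G\xrightarrow{h}\mathtt{A}(d)$, compose a weakly-overinitial morphism $P\to d$ over $h$ with $e$ to obtain a morphism $E\to d$ over $h$. For uniqueness, let $u,v\colon E\to d$ be two morphisms over the \emph{same} arrow $h$ (so $\mathtt{A}(u)=\mathtt{A}(v)=h$), and form their $K$-equalizer $m\colon M\to E$; then $\mathtt{A}(m)=\mathtt{A}(1_{E})=1_{G}$, hence $M\in\mathcal{D}_{G}$ and $em\colon M\to P$ is over $1_{G}$. Applying the weak overinitiality of $P$ to the object $M$ and the arrow $1_{G}\colon G\to\mathtt{A}(M)$ produces $s\colon P\to M$ over $1_{G}$, so that $ems\colon P\to P$ lies in $J$; consequently $(ems)e=e$, i.e. $e\,(mse)=e$, and since $e$ is monic $mse=1_{E}$. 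Therefore $m$ is a split epimorphism; being also a monomorphism (an equalizer of $u$ and $v$) it is an isomorphism, and $um=vm$ then forces $u=v$. Hence $E$ is overinitial in $\mathcal{D}_{G}$, which completes the proof.

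The step I expect to be the real obstacle --- and the reason the hypothesis demands $K$-equalizers rather than merely overcompleteness --- is the uniqueness argument: the pair $u,v\colon E\to d$ that must be equalized there is over an \emph{arbitrary} arrow $h\colon G\to\mathtt{A}(d)$, not over an identity, so an ordinary overequalizer (which by definition only equalizes pairs living inside a single fiber) is of no use; one genuinely needs the $K$-equalizer, and one must moreover verify, via the clause $\mathtt{A}(e)=\mathtt{A}(1_{a})$ in the definition of $K$-equalizer, that its vertex lands back in the fiber $\mathcal{D}_{G}$ so that the weak overinitiality of $P$ can be re-applied to it. The remaining care required is the routine bookkeeping that the various canonical morphisms ($\delta$, $\langle\phi\rangle$, the diagonal through $\prod^{J}P$) really are over $1_{G}$, which is exactly what the universal properties of the over(co)limits guarantee.
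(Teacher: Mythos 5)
Your argument is correct and is precisely the classical Freyd initial-object construction (weakly overinitial overproduct, joint $K$-equalizer of the fiberwise endomorphisms, then the split-mono trick for uniqueness) transposed to the overcategorical setting, which is exactly the proof the paper omits with a pointer to the classical case and to \citet[proposition~1.8]{bournpenon:moncart}. You also correctly isolate the one genuinely overcategorical point the paper flags, namely that the uniqueness step needs a $K$-equalizer of a pair over an arbitrary $h$ rather than an overequalizer, with its vertex forced back into the fiber $\mathcal{D}_{G}$.
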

The proof of this lemma is very similar to the classical one
(see~\citep[proposition~1.8 page~25]{bournpenon:moncart}) and thus it
is not necessary to give the details of the demonstration. It is
useful to note that this demonstration requires $K$-equalizers.
 
Let $(\mathcal{A},\mathtt{A})\xrightarrow{F}(\mathcal{B},\mathtt{A})$
be an overfunctor.  An object $B\in (\mathcal{B},\mathtt{A})$ has the
solution set condition for $F$ if there is a set $I$ and a set of
objects $\{(A_{i},b_{i})/ i\in I\hspace{.1cm}\text{and}\hspace{.1cm}
\mathtt{A}(b_{i})=1_{\mathtt{A}(B)}\}\subset (B\downarrow F)$, such
that $\forall (A,b)\in (B\downarrow F)$, $\exists i\in I$, $\exists
A_{i}\xrightarrow{a_{i}}A$ in $(\mathcal{A},\mathtt{A})$, such that
$F(a_{i})b_{i}=b$.
\begin{theorem}[Freyd's Overadjoint theorem]
  \label{surfreyd}
  Let $(\mathcal{A},\mathtt{A})$ be an overcomplete overcategory with
  $K$-equalizers, and let
  $(\mathcal{A},\mathtt{A})\xrightarrow{F}(\mathcal{B},\mathtt{A})$ be an
  overfunctor. In that case the following properties are equivalent
  \begin{center}
    \begin{minipage}[c]{0.4\linewidth}
      $F$ has a left overadjunction
    \end{minipage}
    \quad$\Longleftrightarrow$\quad
    \begin{minipage}[c]{0.4\linewidth}
      $F$ preserve overlimits and $K$-equalizers and every object
      $B\in(\mathcal{B},\mathtt{A})$ has a solution set condition for
      $F$
    \end{minipage}
  \end{center}
\end{theorem}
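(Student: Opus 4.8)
The plan is to prove the two implications separately, the left-to-right one being formal and the right-to-left one being the substantial direction, obtained by running the classical argument through the material already set up in this section.

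Assume first that $F$ has a left overadjoint $L$ in the $2$-category $\C/\mathbb{G}$, with unit $\eta$. Being a right adjoint inside a $2$-category, $F$ preserves every overlimit that exists; the same formal argument (a right adjoint preserves any limit present in its domain) shows that $F$ sends a $K$-equalizer $e$ of $\mathcal{A}$ to the equalizer of its image pair, and the arity condition $\mathtt{A}(F(e))=\mathtt{A}(1_{F(a)})$ follows at once from $\mathtt{A}F=\mathtt{A}$ and $\mathtt{A}(e)=\mathtt{A}(1_{a})$, so $F$ preserves $K$-equalizers. Finally $L$ is a morphism and $\eta$ a $2$-cell of $\C/\mathbb{G}$, whence $\mathtt{A}(\eta_{B})=1_{\mathtt{A}(B)}$, and the singleton $\{(L(B),\eta_{B})\}$ is a solution set for $F$ at $B$.

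For the converse, fix $B\in(\mathcal{B},\mathtt{A})$ and work inside the comma overcategory $((B\downarrow F),\mathtt{A})$, whose arity functor lands in $\mathtt{A}(B)/\mathbb{G}$. Since $(\mathcal{A},\mathtt{A})$ is overcomplete with $K$-equalizers and $F$ preserves overlimits and $K$-equalizers, \cref{surcompletudecomma} shows that $((B\downarrow F),\mathtt{A})$ is overcomplete with $K$-equalizers. I then apply the Lemma of the overinitial object (\cref{surinitialobjet}) to this overcategory, with distinguished base object the initial object $1_{\mathtt{A}(B)}$ of $\mathtt{A}(B)/\mathbb{G}$. The point is to identify the right-hand condition of \cref{surinitialobjet} in this instance with the solution set condition for $F$ at $B$: an object $(A_{i},b_{i})$ sits in the fiber over $1_{\mathtt{A}(B)}$ exactly when $\mathtt{A}(b_{i})=1_{\mathtt{A}(B)}$; since $1_{\mathtt{A}(B)}$ is initial in $\mathtt{A}(B)/\mathbb{G}$, quantifying over morphisms $h\colon 1_{\mathtt{A}(B)}\to\mathtt{A}(d)$ collapses to the single such arrow; and a morphism $(A_{i},b_{i})\to(A,b)$ of $(B\downarrow F)$ lying over that arrow is precisely an arrow $A_{i}\xrightarrow{a_{i}}A$ of $\mathcal{A}$ with $F(a_{i})b_{i}=b$. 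Hence \cref{surinitialobjet} produces an overinitial object $(R_{B},v)$ of $((B\downarrow F),\mathtt{A})$ in the fiber over $1_{\mathtt{A}(B)}$, so that $\mathtt{A}(v)=1_{\mathtt{A}(B)}$.

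It remains to assemble these data into a left overadjoint. By \cref{miracle}, since $\mathtt{A}(v)=1_{\mathtt{A}(B)}$, overinitiality of $(R_{B},v)$ is equivalent to $v$ being initial in the ordinary comma category $(B\downarrow F)$, i.e.\ to $B\xrightarrow{v}F(R_{B})$ being a universal arrow from $B$ to $F$. The classical construction of an adjoint from a choice of universal arrows then yields a functor $L$ with $L(B)=R_{B}$, a unit with $\eta_{B}=v$, and an adjunction $L\dashv F$ in $\C$; applying $\mathtt{A}$ to $\mathtt{A}(R_{B})=\mathtt{A}(B)$, to $\mathtt{A}(v_{B})=1_{\mathtt{A}(B)}$, and to the defining equalities $F(L(b))v_{B}=v_{B'}b$ forces $\mathtt{A}L=\mathtt{A}$ and $\mathtt{A}\eta=1_{\mathtt{A}}$, so $L$ is an overfunctor, $\eta$ is overnatural, and, the counit and triangle identities being inherited from $\C$, $L\dashv F$ is an overadjunction. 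The step I expect to demand the most care is the arity bookkeeping when matching the hypothesis of \cref{surinitialobjet} with the solution set condition, namely tracking the arity functor of $(B\downarrow F)$ into $\mathtt{A}(B)/\mathbb{G}$ and using initiality of $1_{\mathtt{A}(B)}$ there so that ``there is a morphism $k_{i}\to d$ over $h$'' becomes exactly ``$F(a_{i})b_{i}=b$ for some $a_{i}$''; everything else is a transcription of the classical proof together with routine arity checks, which I would keep brief.
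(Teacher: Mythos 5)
Your proposal is correct and follows essentially the same route as the paper: establish overcompleteness and $K$-equalizers of $((B\downarrow F),\mathtt{A})$ via \cref{surcompletudecomma}, invoke the Lemma of the overinitial object (\cref{surinitialobjet}) in the fiber over $1_{\mathtt{A}(B)}$, pass to ordinary initiality via \cref{miracle}, and assemble the adjoint classically. You supply more detail than the paper does (notably the arity bookkeeping matching the solution set condition to the hypothesis of \cref{surinitialobjet}, and the easy direction, which the paper dismisses as trivial), but the argument is the same.
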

\begin{proof}
  First we suppose that $F$ preserves overlimits and $K$-equalizers and
  every object $B\in(\mathcal{B},\mathtt{A})$ has a solution set
  condition for $F$. Let $B\in Ob(\mathcal{B})$, the overcategory
  $(\mathcal{A},\mathtt{A})$ is overcomplet and have $K$-equalizers
  which are preserved by $F$, thus thanks to the
  \cref{surcompletudecomma} we know that $((B\downarrow
  F),\mathtt{A})$ is overcomplet and have $K$-equalizers. Therefore
  $((B\downarrow F),\mathtt{A})$ verifies in addition the hypothesis
  "solution set condition" of the lemma of the overinitial object in
  the fiber $(B\downarrow F)_{1_{\mathtt{A}(B)}}$. Thus $((B\downarrow
  F),\mathtt{A})$ has a overinitial object in the fiber $(B\downarrow
  F)_{1_{\mathtt{A}(B)}}$. If we write down
  $B\xrightarrow{\eta_{B}}F(R_{B})$ this overinitial object, then
  thanks to the \cref{miracle}, it is initial in $(B\downarrow
  F)$. Then $F$ has a left adjoint: $G\dashv F$, and it is clearly an
  overadjoint. The converse is trivial.
\end{proof}
\subsection{A Theorem of Barr and Wells in the Overcategorical Context}
\label{ATheoremofBarrandWellsintheSurcategoricalcontext}

As we are going to see, we have an overcategorical version of the result
that we can find in \cite{francisborceux:handbook2}.  This theorem is a overcategorical
adaptation of some results that we can find
in \cite{barrwells:ttt}.
\cref{FusionProductofAdjunctions}).
\begin{theorem}[Barr-Wells's Overcategorical Theorem]
  \label{SurBarrWells}
  Let $(\mathcal{C},\mathtt{A})$ be an overcomplete and overcocomplete
  overcategory with $K$-equalizers.  Let $T$ be an overmonad on
  $(\mathcal{C},\mathtt{A})$, which preserves $\kappa$-filtered
  overcolimits for some regular cardinal $\kappa$. In this case the
  overcategory $(\mathcal{C}^{T},\mathtt{A})$ of overalgebras is
  overcomplete, overcocomplete, and has $K$-equalizers.
\end{theorem}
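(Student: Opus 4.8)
The plan is to isolate the one nontrivial point — the construction of overcoequalizers in $(\mathcal{C}^{T},\mathtt{A})$ — and to read off everything else from the propositions already proved. Overcompleteness of $(\mathcal{C}^{T},\mathtt{A})$ is immediate from \cref{surcompletude_des_algebres}, and the existence of $K$-equalizers in $(\mathcal{C}^{T},\mathtt{A})$ is immediate from \cref{K-egalisateurs_dans_les_algebres}; neither of these uses the hypothesis on $T$. For overcocompleteness, since $(\mathcal{C},\mathtt{A})$ is overcocomplete, \cref{coegalisateurs_entraine_cocompletude} tells us it is enough to prove that $(\mathcal{C}^{T},\mathtt{A})$ has overcoequalizers; so the whole theorem reduces to this, and it is precisely here that the assumption that $T$ preserves $\kappa$-filtered overcolimits will be spent.

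So I would fix a parallel pair of overalgebra morphisms $f,g\colon(A,\alpha)\rightrightarrows(B,\beta)$ lying over a single object $G_{0}=\mathtt{A}(A)=\mathtt{A}(B)$ of $\mathbb{G}$, i.e. with $\mathtt{A}(f)=\mathtt{A}(g)=1_{G_{0}}$, and build its coequalizer inside the fibre $(\mathcal{C}^{T})_{G_{0}}$. Since $T$ is an overfunctor over $\mathbb{G}$ it restricts to a monad $T_{G_{0}}$ on the fibre $\mathcal{C}_{G_{0}}$, and $(\mathcal{C}^{T})_{G_{0}}=(\mathcal{C}_{G_{0}})^{T_{G_{0}}}$; moreover, by the equivalent descriptions of over(co)limits recalled in \cref{defsurlimits}, $\mathcal{C}_{G_{0}}$ has coequalizers, pushouts and colimits of chains — all of connected nonempty shape, all supplied by overcocompleteness of $(\mathcal{C},\mathtt{A})$ and all preserved by the inclusion $\mathcal{C}_{G_{0}}\hookrightarrow\mathcal{C}$. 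I would then run the classical transfinite construction of coequalizers of algebras (Barr's iterated coequalizer, which underlies the cited results of \cite{barrwells:ttt,francisborceux:handbook2}): an ordinal-indexed chain $(Q_{i})_{i<\kappa}$ in $\mathcal{C}_{G_{0}}$ carrying partial $T$-structures, with $Q_{0}$ the coequalizer of the underlying pair, each successor stage built from $Q_{i}$ by a coequalizer and a pushout, and each limit stage a colimit of the chain so far; the construction is arranged so that the partial $T$-structures assemble, at stage $\kappa$, into a genuine action $\gamma\colon TQ_{\kappa}\to Q_{\kappa}$ on $Q_{\kappa}=\colim_{i<\kappa}Q_{i}$, with $\mathtt{A}(\gamma)=1_{G_{0}}$, provided $T$ commutes with that colimit — which it does, since the ordinal $\kappa$ is $\kappa$-filtered ($\kappa$ being regular), so $Q_{\kappa}$ is a $\kappa$-filtered overcolimit, which $T$ preserves by hypothesis. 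One then checks, as classically, that $q_{\kappa}\colon(B,\beta)\to(Q_{\kappa},\gamma)$ is an overalgebra morphism coequalizing $f$ and $g$ and universal for this among overalgebra morphisms.

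It remains to see that $(Q_{\kappa},\gamma)$ is genuinely an \emph{over}coequalizer of the pair and not merely a coequalizer inside the fibre $(\mathcal{C}^{T})_{G_{0}}$, i.e. that it is universal against all overcocones of the pair in the sense of \cref{defsurlimits}, including those whose apex $x$ lies in a fibre other than $\mathcal{C}_{G_{0}}$ (over some morphism $b\colon G_{0}\to\mathtt{A}(x)$ of $\mathbb{G}$). This is where the over-ness of the auxiliary colimits is used: since each coequalizer, pushout and chain-colimit occurring in the construction is an overcolimit and is therefore preserved by the relevant fibre inclusion, one can transport such an overcocone through the successive stages of the construction, and a diagram chase identical to the classical one then produces the required unique factorisation through $q_{\kappa}$. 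With overcoequalizers of $(\mathcal{C}^{T},\mathtt{A})$ in hand, \cref{coegalisateurs_entraine_cocompletude} gives its overcocompleteness, and together with the first paragraph this completes the proof.

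The step I expect to be the real obstacle is exactly this "over-awareness" of the classical construction: checking that each auxiliary colimit is genuinely an overcolimit over $G_{0}$, that the terminal stage is $\kappa$-filtered as an overcolimit so that the hypothesis "$T$ preserves $\kappa$-filtered overcolimits" applies verbatim, and — most delicately — that the universal property really does upgrade from "coequalizer in $(\mathcal{C}_{G_{0}})^{T_{G_{0}}}$" to the full overcategorical universality of \cref{defsurlimits}. Once these compatibilities are in place, the remainder is a faithful transcription of the argument of \cite{barrwells:ttt} with the arity functor carried along, which is why one may reasonably describe the result as an "overcategorical adaptation".
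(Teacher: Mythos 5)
Your proposal is correct in outline, and it agrees with the paper on the reductions: overcompleteness from \cref{surcompletude_des_algebres}, $K$-equalizers from \cref{K-egalisateurs_dans_les_algebres}, and the reduction of overcocompleteness to the existence of overcoequalizers via \cref{coegalisateurs_entraine_cocompletude}. Where you genuinely diverge is in how the overcoequalizers are produced. The paper does \emph{not} construct them directly: it exhibits the overcoequalizer as a left overadjoint $\colim\dashv\Delta$ of the diagonal overfunctor into $((\mathcal{C}^{T})^{(\downdownarrows)},\mathtt{A})$ and invokes Freyd's Overadjoint Theorem (\cref{surfreyd}), so that the only thing to be built by transfinite induction is a \emph{solution set} $S_{F}$ in the fibre $\mathcal{C}^{T}_{G_{0}}$ for each parallel pair $F$ (following Borceux's proposition~4.3.6); the existence and universality of the overcoequalizer then come for free from the adjoint functor theorem, at the price of needing $(\mathcal{C}^{T},\mathtt{A})$ to be overcomplete with $K$-equalizers and $\Delta$ to preserve these. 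You instead run Barr's iterated-coequalizer construction to build the coequalizing overalgebra $(Q_{\kappa},\gamma)$ explicitly, spending the hypothesis that $T$ preserves $\kappa$-filtered overcolimits at the terminal stage exactly as the paper spends it inside the solution-set induction. Your route is more constructive and, notably, does not route the cocompleteness argument through the $K$-equalizer machinery at all (that hypothesis would then only be needed for the $K$-equalizer part of the conclusion), but it obliges you to verify by hand both the algebra axioms on $\gamma$ and the full overcategorical universal property of \cref{defsurlimits} against cocones landing in other fibres --- precisely the ``over-awareness'' issue you flag, which the paper's adjoint-functor route absorbs into the statement of \cref{surfreyd}. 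Both arguments are sound; yours trades the comfort of the already-proved overcategorical Freyd theorem for a self-contained construction with slightly leaner hypotheses on the cocompleteness side.
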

\begin{proof}
  The overcompletness of $(\mathcal{C}^{T},\mathtt{A})$ and the fact
  that $(\mathcal{C}^{T},\mathtt{A})$ has $K$-equalizers is a direct
  consequence of \cref{surcompletude_des_algebres} and
  \cref{K-egalisateurs_dans_les_algebres}.
  
  Thanks to \cref{coegalisateurs_entraine_cocompletude}, we also
  know that it is sufficient to prove that
  $(\mathcal{C}^{T},\mathtt{A})$ has overcoequalizers to demonstrate
  that it is overcocomplete. To prove the existence of overcoequalizers
  in $(\mathcal{C}^{T},\mathtt{A})$, it suffices to show that the
  diagonal overfunctor
  \[\xymatrix{(\mathcal{C}^{T},\mathtt{A})\ar[rr]<+6pt>^{\Delta}
    &&((\mathcal{C}^{T})^{(\downdownarrows)},\mathtt{A})\ar@{.>}[ll]<+4pt>^{\colim}_{\top}}\]
  has a left overadjoint $\colim\dashv\Delta$.  We are in a position to apply
  Freyd's Overadjoint Theorem (see \cref{surfreyd}),
  because $(\mathcal{C}^{T},\mathtt{A})$ is overcomplete and has
  $K$-equalizers and $\Delta$ preserves overlimits and $K$-equalizers.
  This last point is easy because limits in
  $(\mathcal{C}^{T})^{(\downdownarrows)}$ are computed pointwise.  We
  need to show that every object of
  $((\mathcal{C}^{T})^{(\downdownarrows)},\mathtt{A})$ has a solution
  set condition $S_{F}$ for $\Delta$. In particular if $(F,G_{0})\in
  (\mathcal{C}^{T})^{(\downdownarrows)}$, then this solution set
  condition $S_{F}$ must be in
  $\mathcal{C}^{T}_{\mathtt{A}((F,G_{0}))=G_{0}}$ \hspace{.1cm}. More
  precisely $F$ is the following data: It is a pair of morphism of
  $\mathcal{C}^{T}$:
  $\xymatrix{(A,\alpha)\ar[r]<+2pt>^{f}\ar[r]<-2pt>_{g}&(B,\beta)}$,
  which is in the fiber $\mathcal{C}^{T}_{G_{0}}$.  A solution set
  condition $S_{F}$ for $\Delta$ is given by
  
  \[S_{F}=\{\xymatrix{(B,\beta)\ar[r]^{b_{i}}&(D_{i},\delta_{i})}\in\mathcal{C}^{T}_{G_{0}}/i\in
  I\}\] 
  
  such that we give ourselves the natural transformation
  $F\xrightarrow{\sigma}\Delta(C,\gamma)$ (where
  $(C,\gamma)\in(\mathcal{C}^{T},\mathtt{A})$), then there are a
  $(D_{i},\delta_{i})\in S_{F}$, a morphism of overalgebras
  $(D_{i},\delta_{i})\xrightarrow{a}(C,\gamma)$, and a natural
  transformation $F\xrightarrow{\tau}\Delta(D_{i},\delta_{i})$, such
  that $\Delta(a)\tau=\sigma$. Therefore it means that when we
  consider the following diagram (where $j$ is a morphism of
  $\mathbb{G}$ and $h$ is not necessary in the same fiber as $f$
  and $g$; here we have $\mathtt{A}(f)=\mathtt{A}(g)=1_{G_{0}}$ and
  $\mathtt{A}(h)=j$)
  \[\xymatrix{(A,\alpha)\ar[d]<+1.4pt>_{f}\ar[d]<-1.4pt>^{g}\\
    (B,\beta)\ar[rd]^{h}\\
    &(C,\gamma)\\
    G_{0}\ar[r]^{j}&G_{1}}\]
    
  such that $hf=hg$, then $\exists i\in I$, $\exists
  (D_{i},\delta_{i})\xrightarrow{a_{i}}C,\gamma)$, such that the
  following diagram commutes
  \[\xymatrix{(A,\alpha)\ar[d]<+1.4pt>_{f}\ar[d]<-1.4pt>^{g}\\
    (B,\beta)\ar@{.>}[d]_{b_{i}}\ar[rd]^{h}\\
    (D_{i},\delta_{i})\ar@{.>}[r]_{a}&(C,\gamma)\\
    G_{0}\ar[r]^{j}&G_{1}}\]
    
  If we build such a solution set then Freyd's Overadjoint
  Theorem (see \cref{surfreyd}) shows that
  $(\mathcal{C}^{T},\mathtt{A})$ has overcoequalizers.
 
  This solution set condition is built as in the classical case
  (see~\citet[proposition~4.3.6 page~206]{francisborceux:handbook2}),
  i.e by transfinite induction, and there is no difficulty in transcribing
   it from the classical case to the overcategorical context.
\end{proof}

\subsection{Beck's theorem in the overcategorical Context}
\label{BeckTheoremintheSurcategoricalcontext}

It is easy to see that Beck theorem remains true in
$\C/\mathbb{G}$. We call this theorem "over-Beck's
theorem" to refer to its overcategorical nature. Like in the classical
case, we use two lemmas which facilitate the
demonstration of the over-Beck's theorem (see~\cite{maclane:1998}). But the proof of these two lemmas
and of the over-Beck's theorem are very similar to the classical one
(see~\cite{maclane:1998}), and thus it is not necessary to give the
details of the demonstrations. Contrary to the Freyd's
overadjoint theorem  and the Barr-Wells's overcategorical Theorem, we notice that we do not need
the presence of $K$-equalizers.
\begin{lemma}
  \label{resultat1_beck}
  Let
  $\xymatrix{(\mathcal{A},\mathtt{A})\ar[r]<+6pt>^{G}&(\mathcal{X},\mathtt{A})
    \ar[l]<+4pt>^{F}_{\top}}$,\vspace{2cm}$\xymatrix{(\mathcal{A}',\mathtt{A})\ar[r]<+6pt>^{G'}
    &(\mathcal{X},\mathtt{A})\ar[l]<+4pt>^{F'}_{\top}}$, two
  overadjunctions which generate the same overmonad $T$. If we suppose that
  $G$ satisfies hypothesis $3$ of \cref{surbeck} then there is 
  a unique overfunctor
  $(\mathcal{A}',\mathtt{A})\xrightarrow{M}(\mathcal{A},\mathtt{A})$
  such that  the following diagram commutes
  \[\xymatrix{\mathcal{X}\ar[d]_{1_{\mathcal{X}}}\ar[r]^{F'}&\mathcal{A}'
    \ar[d]^{M}\ar[r]^{G'}&\mathcal{X}\ar[d]^{1_{\mathcal{X}}}\\
    \mathcal{X}\ar[r]_{F}&\mathcal{A}\ar[r]_{G}&\mathcal{X}}
  \]
\end{lemma}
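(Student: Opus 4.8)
The statement to prove is the overcategorical version of the classical "comparison functor" lemma: given two overadjunctions $F\dashv G$ and $F'\dashv G'$ over the same base $(\mathcal{X},\mathtt{A})$ that induce the same overmonad $T$, there is a unique overfunctor $M\colon(\mathcal{A}',\mathtt{A})\to(\mathcal{A},\mathtt{A})$ commuting with both the left and right legs, provided $G$ satisfies hypothesis $3$ of \cref{surbeck} (which I read as the relevant "creates coequalizers of $G$-split pairs" condition). The strategy is to reuse the classical proof of Mac~Lane verbatim at the level of the underlying categories $\mathcal{X}, \mathcal{A}, \mathcal{A}'$, and then to check the single extra thing that the overcategorical setting demands, namely that the resulting comparison functor $M$ is compatible with the arity functors. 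Since all three overadjunctions live in $\C/\mathbb{G}$ over a \emph{fixed} base $\mathbb{G}$, with all the involved overcategories carrying the \emph{same} arity functor $\mathtt{A}$ (the base map on $\mathbb{G}$ being an identity throughout), the arity-compatibility of $M$ will essentially be automatic.

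First I would recall the classical construction: for $X'\in\mathcal{A}'$, the unit/counit of $F'\dashv G'$ gives a $G'$-split coequalizer pair, and since $G$ creates coequalizers of $G$-split pairs (hypothesis $3$), one transports this along the canonical identification of Eilenberg--Moore data to define $M(X')$ in $\mathcal{A}$ as the value of that created coequalizer; the universal property yields $M$ on morphisms and its uniqueness. Concretely, one observes that both overadjunctions factor through the Eilenberg--Moore overcategory $(\mathcal{X}^{T},\mathtt{A})$ via the respective comparison functors, and it is the comparison functor of $F'\dashv G'$ followed by a (created) inverse-type construction using hypothesis $3$ on $G$ that produces $M$. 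This is precisely \citet{maclane:1998}'s argument, and nothing in it is disturbed by the presence of the arity functors, because every functor in the two squares is an overfunctor over the identity of $\mathbb{G}$, hence the constructed coequalizers are computed in fibers in a way compatible with $\mathtt{A}$.

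Next I would verify the overcategorical overhead. Because $G\colon(\mathcal{A},\mathtt{A})\to(\mathcal{X},\mathtt{A})$ is an overfunctor, the coequalizer that hypothesis $3$ forces $G$ to create is an \emph{overcoequalizer}: it is created in the fiber $\mathcal{A}_{\mathtt{A}(G'X')}$ and its arity matches that of the objects involved. Hence the object $M(X')$ automatically satisfies $\mathtt{A}(M(X'))=\mathtt{A}(X')$ — one checks this by observing $\mathtt{A}\circ G'=\mathtt{A}=\mathtt{A}\circ G\circ M$ on the nose from the commuting square $G\circ M = G'$ together with the fact that $G$ and $G'$ reflect arities. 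Likewise on morphisms, $\mathtt{A}(M f)$ is forced by $G(M f)=G'(f)$ and the faithfulness of $\mathtt{A}$ along the relevant fibers. So $M$ is an overfunctor, and the two squares commute as overfunctor identities, not merely as functor identities.

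The main obstacle — and it is a mild one — is bookkeeping: making sure that the split coequalizer used to define $M(X')$ is genuinely an \emph{over}(split)coequalizer, so that hypothesis $3$ of \cref{surbeck} (stated there for overforks/over-split pairs) actually applies. Here \cref{splitsurfork} does the work: a split fork that lies in a single fiber is an absolute overcoequalizer, and the canonical presentation of an object of $\mathcal{A}'$ by $F'G'$-data is precisely such a fiberwise split fork because $F'\dashv G'$ is an overadjunction over the identity, so unit and counit are arity-preserving. Granting that, the remainder is a transcription of the classical proof. I would therefore present the proof as: (i) cite the classical construction of $M$ and its uniqueness from \citet{maclane:1998}, applied to the underlying categories; (ii) invoke \cref{splitsurfork} to see the relevant coequalizers are overcoequalizers so hypothesis $3$ applies overcategorically; (iii) check $\mathtt{A}(M(-))=\mathtt{A}(-)$ on objects and morphisms from the commuting squares; hence $M$ is the desired overfunctor, unique among overfunctors because it is already unique among functors.
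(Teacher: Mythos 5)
Your proposal is correct and follows essentially the same route as the paper, which itself gives no details and simply declares the proof ``very similar to the classical one'' in \citet{maclane:1998}: transcribe Mac Lane's construction of the comparison functor via coequalizers created by $G$ from $G$-split pairs, and use \cref{splitsurfork} to see that the relevant split forks lie in fibers and are therefore absolute overcoequalizers, so hypothesis~$3$ of \cref{surbeck} applies. One small remark: the arity compatibility $\mathtt{A}M=\mathtt{A}$ needs no appeal to any faithfulness of $\mathtt{A}$ --- it follows immediately from $GM=G'$ together with $\mathtt{A}G=\mathtt{A}=\mathtt{A}G'$.
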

\begin{lemma}
  \label{resultat2_beck}
  In the situation
  $\xymatrix{(\mathcal{A},\mathtt{A})\ar[r]<+6pt>^{G}&(\mathcal{X}\mathtt{A})\ar[l]<+4pt>^{F}_{\top}}$,
  $G^{T}$ creates overcoequalizers of $(\mathcal{X}^{T},\mathtt{A})$
  for absolute overcoequalizers, that is given the diagram
  \[\xymatrix{(x,h)\ar[r]<+2pt>^{d_{0}}\ar[r]<-2pt>_{d_{1}}&(y,k)}\]
  in one fiber of $(\mathcal{X}^{T},\mathtt{A})$ such that the pair
  \[
  \xymatrix{G^{T}((x,h))\ar[r]<+2pt>^{G^{T}(d_{0})}
    \ar[r]<-2pt>_{G^{T}(d_{1})}&G^{T}((y,k))},\hspace{.1cm}(\text{i.e}\hspace{.1cm}
  \xymatrix{x\ar[r]<+2pt>^{d_{0}}\ar[r]<-2pt>_{d_{1}}&y})
  \]
  has an absolute overcoequalizer $y\xrightarrow{e}z$, so there is a
  unique $T$-algebra $(z,m)$ and a unique morphism
  $(y,k)\xrightarrow{f}(z,m)$ of $(\mathcal{X}^{T},\mathtt{A})$ such
  that $G^{T}(f)=e$ and furthermore $(y,k)\xrightarrow{f}(z,m)$ is a
  overcoequalizer of the pair
  $\xymatrix{(x,h)\ar[r]<+2pt>^{d_{0}}\ar[r]<-2pt>_{d_{1}}&(y,k)}$
\end{lemma}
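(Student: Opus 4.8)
The plan is to follow the classical argument, keeping careful track of the arity functor throughout. Start with the diagram $\xymatrix{(x,h)\ar[r]<+2pt>^{d_{0}}\ar[r]<-2pt>_{d_{1}}&(y,k)}$ lying in one fiber $(\mathcal{X}^{T})_{G}$, and suppose the underlying pair $\xymatrix{x\ar[r]<+2pt>^{d_{0}}\ar[r]<-2pt>_{d_{1}}&y}$ in $\mathcal{X}$ has an \emph{absolute} overcoequalizer $y\xrightarrow{e}z$. Because the overcoequalizer is absolute, applying the functor $T$ (or $T^{2}$) to it again yields overcoequalizer diagrams; moreover $e$ lives in the fiber over $\mathtt{A}(1_y)$, so $\mathtt{A}(e)$ is an identity and $z$ sits in the same fiber as $y$ (hence the same fiber as $(x,h),(y,k)$ — this is the overcategorical content we must not lose). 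First I would use absoluteness of $e$ to obtain, by the standard diagram chase, a unique arrow $m\colon Tz\to z$ in $\mathcal{X}$ making $(z,m)$ a $T$-algebra and making $e$ a morphism of algebras $(y,k)\xrightarrow{e}(z,m)$; one checks the associativity and unit square for $m$ by precomposing with the (epi, since absolute coequalizer) arrow $Te$ or $T^2 e$. Here one must observe that every arrow produced — $m$, and the comparison arrows in the chase — lies over an identity of $\mathbb{G}$, since $e$ and $T e$ do and $\mathtt{A}$ is a functor; thus $(z,m)\in(\mathcal{X}^{T})_{G}$ and $e$ is a morphism of the overcategory $(\mathcal{X}^{T},\mathtt{A})$, indeed a fibered one.

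Next I would verify that $(y,k)\xrightarrow{e}(z,m)$ is actually a coequalizer of $d_0,d_1$ \emph{in} $\mathcal{X}^{T}$, and in fact an overcoequalizer. Given any $T$-algebra morphism $(y,k)\xrightarrow{t}(w,n)$ with $td_0=td_1$ and any morphism $j$ of $\mathbb{G}$ such that $\mathtt{A}(t)=j$ (the overcocone data), absoluteness of $e$ in $\mathcal{X}$ furnishes a unique $z\xrightarrow{s}w$ in $\mathcal{X}$ with $se=t$; then $\mathtt{A}(s)=j$ automatically (precompose $\mathtt{A}(s)\mathtt{A}(e)=\mathtt{A}(t)=j$ with $\mathtt{A}(e)$ an identity), and a short chase using that $Te$ is epi shows $s$ is a $T$-algebra morphism $(z,m)\xrightarrow{s}(w,n)$. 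Uniqueness of $s$ in $\mathcal{X}^{T}$ follows from its uniqueness in $\mathcal{X}$ together with faithfulness of $G^{T}$. Finally $G^{T}(e)=e$ by construction, and the factorization through $e$ is compatible with the arity maps exactly as in the $\downdownarrows$-overcolimit definition from \cref{defsurlimits}, so $e$ is an overcoequalizer; and $G^{T}$ creates it because we produced $(z,m)$, the algebra structure, and the universal property uniquely from the data downstairs.

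The main obstacle is not any single step — each diagram chase is the classical one — but rather the bookkeeping that guarantees everything stays inside the correct fiber of $\mathbb{G}$, i.e. that every arrow constructed lies over an identity (so that $(z,m)$ is genuinely in $(\mathcal{X}^{T})_{G}$ and the coequalizer is an \emph{over}coequalizer, not merely a coequalizer in the total category). This is exactly where \cref{splitsurfork} and the definition of absolute overcoequalizer are doing their work: since the defining fork of an absolute overcoequalizer lives in a fiber, applying $T$ keeps us in a fiber, and the induced comparison arrows inherit identity arities. Once that observation is in place, the rest is a faithful transcription of the classical proof of the Beck lemma (see~\cite{maclane:1998}), and, as the paper already notes for the other Beck-type results, no $K$-equalizers are needed here.
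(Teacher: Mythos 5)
Your proposal is correct and takes essentially the same route as the paper: the paper gives no written proof of \cref{resultat2_beck}, saying only that it is the classical Beck lemma of \cite{maclane:1998} transcribed to the overcategorical setting, and your argument is exactly that transcription, with the only genuinely new content (that $m$, $s$ and the comparison arrows all lie over identities of $\mathbb{G}$ because $e$ and $Te$ do, so $(z,m)$ stays in the fiber and the coequalizer is an overcoequalizer) correctly identified and handled. The one cosmetic slip is crediting \cref{splitsurfork} for this bookkeeping, when it really follows from the definition of overcoequalizer together with the hypothesis that the pair lies in one fiber; this does not affect the correctness of the proof.
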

\begin{theorem}[Beck's Overmonadicity Theorem]
  \label{surbeck}
  Let us consider the overadjunction 
  $\xymatrix{(\mathcal{A},\mathtt{A})\ar[r]<+6pt>^{G}&(\mathcal{X},\mathtt{A})\ar[l]<+4pt>^{F}_{\top}}$
  with overmonad $T$,
  the canonical final overadjunction
  $\xymatrix{(\mathcal{X}^{T},\mathtt{A})\ar[r]<+6pt>^{G^{T}}&(\mathcal{X},\mathtt{A})
    \ar[l]<+4pt>^{F^{T}}_{\top}}$, and the comparaison overfunctor
  $(\mathcal{A},\mathtt{A})\xrightarrow{K}(\mathcal{X}^{T},\mathtt{A})$
  which is the unique overfunctor such that the following diagram
  commutes
  \[\xymatrix{\mathcal{X}\ar[d]_{1_{\mathcal{X}}}\ar[r]^{F}&\mathcal{A}
    \ar[d]^{K}\ar[r]^{G}&\mathcal{X}\ar[d]^{1_{\mathcal{X}}}\\
    \mathcal{X}\ar[r]_{F^{T}}&\mathcal{X}^{T}\ar[r]_{G^{T}}&\mathcal{X}}\]
  In this case the following conditions are equivalent
  \begin{enumerate}
  \item $K$ is an isomorphism in $\C/\mathbb{G}$ (i.e there is a
    overfunctor
    $(\mathcal{X}^{T},\mathtt{A})\xrightarrow{L}(\mathcal{A},\mathtt{A})$
    such as $KL=1_{\mathcal{X}^{T}}$ and $LK=1_{\mathcal{A}}$).
  \item
    $(\mathcal{A},\mathtt{A})\xrightarrow{G}(\mathcal{X},\mathtt{A})$
    creates overcoequalizers of
    $\xymatrix{a\ar[r]<+2pt>^{f}\ar[r]<-2pt>_{g}&b}$ for which the
    pair $\xymatrix{G(a)\ar[r]<+2pt>^{G(f)}\ar[r]<-2pt>_{G(g)}&G(b)}$
    has an absolute overcoequalizer.
  \item
    $(\mathcal{A},\mathtt{A})\xrightarrow{G}(\mathcal{X},\mathtt{A})$
    creates overcoequalizers of
    $\xymatrix{a\ar[r]<+2pt>^{f}\ar[r]<-2pt>_{g}&b}$ for which the
    pair $\xymatrix{G(a)\ar[r]<+2pt>^{G(f)}\ar[r]<-2pt>_{G(g)}&G(b)}$
    has split overcoequalizers.
  \end{enumerate}
\end{theorem}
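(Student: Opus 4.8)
The plan is to transcribe the classical proof of Beck's monadicity theorem (see~\cite{maclane:1998}) into the $2$-category $\C/\mathbb{G}$, relying on \cref{resultat1_beck} and \cref{resultat2_beck} for the two technical steps, and on \cref{splitsurfork} and \cref{suralgebrasourcoequalizers} to guarantee that all the (co)limits that occur are computed inside a single fiber. Concretely I would establish the cycle of implications $(1)\Rightarrow(2)\Rightarrow(3)\Rightarrow(1)$.

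For $(2)\Rightarrow(3)$ there is nothing to do: by \cref{splitsurfork} a split overcoequalizer is an absolute overcoequalizer, so the pairs appearing in $(3)$ form a subclass of those appearing in $(2)$, and a functor creating overcoequalizers for the larger class creates them for the smaller one. For $(1)\Rightarrow(2)$, recall from the Beck triangle that $G^{T}K=G$. By \cref{resultat2_beck}, $G^{T}$ creates the overcoequalizer of any pair of overalgebras lying in a single fiber whose $G^{T}$-image has an absolute overcoequalizer; this is exactly condition $(2)$ for $G^{T}$. If $K$ is an isomorphism in $\C/\mathbb{G}$, then creation of such overcoequalizers transports along $K$ and its strict inverse, so $G=G^{T}K$ inherits property $(2)$.

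The substantive step is $(3)\Rightarrow(1)$. Here I would apply \cref{resultat1_beck} to the two overadjunctions $(F,G)$ and $(F^{T},G^{T})$, which generate the same overmonad $T$: since $G$ satisfies hypothesis $(3)$, there is a unique overfunctor $(\mathcal{X}^{T},\mathtt{A})\xrightarrow{L}(\mathcal{A},\mathtt{A})$ with $LF^{T}=F$ and $GL=G^{T}$; in particular $GL=G^{T}$ forces $L$ to lie over $1_{\mathbb{G}}$. Explicitly, $L$ sends an overalgebra $(x,h)\in\mathcal{X}^{T}_{G_{0}}$ to the overcoequalizer in the fiber $\mathcal{A}_{G_{0}}$ of the canonical pair $FGFx\rightrightarrows Fx$, which exists because its $G$-image $TTx\rightrightarrows Tx$ is a split overfork in $\mathcal{X}_{G_{0}}$ (hence an absolute overcoequalizer by \cref{splitsurfork}) and $G$ creates such overcoequalizers by hypothesis $(3)$. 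It then remains to check $KL=1_{\mathcal{X}^{T}}$ and $LK=1_{\mathcal{A}}$. This is a diagram chase built on \cref{suralgebrasourcoequalizers}: every overalgebra is the overcoequalizer of the canonical split pair of free overalgebras; $L$ preserves these overcoequalizers by construction, and $K$ preserves them because $G^{T}K=G$ creates them; on free overalgebras $KL$ and $LK$ reduce to the identity via the equations $LF^{T}=F$ and $KF=F^{T}$ read off from the Beck triangle; passing from free overalgebras to arbitrary ones through the canonical presentation gives $KL=1$ and $LK=1$ on all objects and morphisms.

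The routine content is the identification of the split overfork $TTx\rightrightarrows Tx\to x$ and the bookkeeping showing that every functor in sight is a strict overfunctor over $1_{\mathbb{G}}$. The one point deserving real attention is that the statement asks for an honest \emph{isomorphism} in $\C/\mathbb{G}$ rather than a mere equivalence: this is precisely why \cref{resultat1_beck} and \cref{resultat2_beck} are phrased with \emph{creation} of overcoequalizers, a strict universal property, rather than with preservation and reflection, and why the inverse $L$ produced above is strict and the composites $KL$, $LK$ are equal on the nose to the identities. Beyond this, no phenomenon occurs that is not already present in the classical argument, which is why the details need not be spelled out in full.
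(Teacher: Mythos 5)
Your proposal is correct and takes essentially the route the paper intends: the paper gives no written proof beyond declaring it a transcription of Mac Lane's classical argument via \cref{resultat1_beck} and \cref{resultat2_beck}, and your cycle $(1)\Rightarrow(2)\Rightarrow(3)\Rightarrow(1)$ --- with $(2)\Rightarrow(3)$ from split overforks being absolute (\cref{splitsurfork}), $(1)\Rightarrow(2)$ by transporting \cref{resultat2_beck} along the isomorphism $K$, and $(3)\Rightarrow(1)$ building $L$ from \cref{resultat1_beck} as the created overcoequalizer of $FGFx\rightrightarrows Fx$ in the fiber --- is exactly that transcription. No gap or divergence to report.
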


\section{Free Overmonoids} 
\label{LiberalsSurmonoidalsCategories}
In \cite{bournpenon:moncart} the authors suggest two constructions of
the free monoid associated with an object of a monoidal category.
This first construction (\citet[proposition~1.2
page~14]{bournpenon:moncart}) requires further properties on the
underlying monoidal category that the authors call "numérale" (for the
overcategorical context; see \citet[proposition~1.3.3
page~24]{bournpenon:moncart}).  The second construction of the free
monoid such as it is found in~\citet[proposition~1.3
page~16]{bournpenon:moncart} fits well with the pointed case (see
\cref{PointedSurmonoidal}) and we are especially interested in this
case (but in the overcategorical context). As the first construction,
this second construction requires further properties on the underlying
monoidal category. Therefore we call "liberal" those useful properties
by which the free monoid can be obtained from this second
construction. We shall make a small reminder of the main results but
the reader is deeply encouraged to see the details of these
constructions in~\cite{bournpenon:moncart} because we greatly use them
at the end of the proof of the \cref{theoremdufreesurmonoid}.  After
we will show that all of these constructions apply in the overmonoidal
context (which is the overcategorical version of the monoidal context),
where overmonoidals overcategories are for monoidal categories what
overcategories are for categories. Although techniques used here are close to those
 we find in \cite{bournpenon:moncart}, some concepts like
 Liberal Overmonoidal Overcategories and Pointed Overmonoidal Overcategories are new.
 In particular the proof of the \cref{theoremdufreesurmonoid} 
 is similar to the \cref{theoremdufreemon} below which is in \cite{bournpenon:moncart}.
\subsection{Liberal monoidal categories}
\label{LiberalMonoidalCategories}
Let $\mathcal{V}=(\mathbb{V},\bigotimes,I,u_{l},u_{r},ass)$ be a
monoidal category. We sometimes denote it by its underlying category
$\mathbb{V}$.  $\mathcal{V}$ is liberal if the following properties
hold:
\begin{itemize}
\item $\mathbb{V}$ has $\overrightarrow{\mathbb{N}}-$colimits and
  coequalizer;
\item $\forall{X\in{\mathbb{V}(0)}}$, $(-)\bigotimes{X}$ and
  $X\bigotimes{(-)}$ preserves $\overrightarrow{\mathbb{N}}-$colimits;
\item $\forall{X\in{\mathbb{V}(0)}}$, $(-)\bigotimes{X}$ preserves
  coequalizers.
\end{itemize}
Let $\mathbb{M}on(\mathbb{V})$ be the category of monoids in $\mathbb{V}$. We have a forgetful functor
$\mathbb{M}on(\mathbb{V})\xrightarrow{U}\mathbb{V}$,
$(M,e,m)\longmapsto M$ and we have in \citet[proposition~1.3
page~16]{bournpenon:moncart}:

\begin{proposition}        
  \label{theoremdufreemon}
  If $\mathcal{V}$ is liberal and if $I$ is an initial object then the
  preceding forgetful functor has a left adjoint
  \[\xymatrix{\mathbb{M}on(\mathbb{V})\ar[r]<+6pt>^(.6){U}&\mathbb{V}\ar[l]<+4pt>^(.4){Mo}_(.35){\top}}\]
\end{proposition}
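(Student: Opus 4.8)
The plan is to construct, for each object $X \in \mathbb{V}$, a monoid $Mo(X)$ together with a map $X \to U(Mo(X))$ satisfying the universal property of the free monoid, following the second construction of Bourn--Penon. First I would form, for a fixed $X$, the sequence of "partial tensor powers": set $M_0 = I$ and then build $M_{n+1}$ as a coequalizer (or pushout-type colimit) that glues $I \oplus X \otimes M_n$ along the identifications forced by the unit isomorphisms $u_l, u_r$, so that $M_n$ plays the role of $\bigoplus_{k \le n} X^{\otimes k}$ but assembled intrinsically. Because $I$ is initial, the coprojections $M_n \to M_{n+1}$ are well defined, and we obtain a chain $M_0 \to M_1 \to M_2 \to \cdots$ indexed by $\overrightarrow{\mathbb{N}}$. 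Define $Mo(X) := \colim_{n} M_n$, which exists since $\mathbb{V}$ has $\overrightarrow{\mathbb{N}}$-colimits.

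The next step is to equip $Mo(X)$ with a monoid structure. The unit $e \colon I \to Mo(X)$ is the coprojection from $M_0 = I$. For the multiplication $m \colon Mo(X) \otimes Mo(X) \to Mo(X)$, I would use that $(-)\otimes Mo(X)$ preserves $\overrightarrow{\mathbb{N}}$-colimits (liberality, first and second bullets), so $Mo(X) \otimes Mo(X) \cong \colim_n (M_n \otimes Mo(X))$, and likewise $M_n \otimes (-)$ preserves the colimit in the second variable; one then defines compatible maps $M_n \otimes M_p \to M_{n+p}$ by induction on $n$ using the structure maps of the $M_\bullet$ chain, and passes to the colimit. Associativity and unitality of $m$ are verified by checking them at each finite stage $M_n \otimes M_p \otimes M_q$ and using that the relevant functors preserve the filtered colimits, so that an equality of cocones forces equality of the colimit maps; this is where the hypothesis that $(-)\otimes X$ preserves coequalizers (third bullet) is needed, since the $M_{n+1}$ are themselves coequalizers and one must commute $(-)\otimes(-)$ past them.

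For the universal property, given any monoid $(N, e_N, m_N)$ and a morphism $f \colon X \to U(N)$, I would construct the unique monoid map $\bar f \colon Mo(X) \to N$ by defining compatible maps $\bar f_n \colon M_n \to N$ inductively: $\bar f_0 \colon I \xrightarrow{e_N} N$, and $\bar f_{n+1}$ is induced out of the coequalizer defining $M_{n+1}$ from $e_N$ and the composite $X \otimes M_n \xrightarrow{f \otimes \bar f_n} N \otimes N \xrightarrow{m_N} N$, using associativity/unitality of $N$ to check the coequalizer condition. Passing to the colimit gives $\bar f$; that it is a monoid morphism and the unique one extending $f$ again reduces to checking equalities stagewise and invoking colimit-preservation. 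The main obstacle I anticipate is purely bookkeeping: setting up the inductive definition of the $M_n$ and the maps $M_n \otimes M_p \to M_{n+p}$ so that all the coherence squares (unit isomorphisms, associativity constraint $ass$) actually commute, and verifying that the two functors $(-)\otimes Z$ and $Z\otimes(-)$ interact correctly with both the coequalizers and the $\overrightarrow{\mathbb{N}}$-colimits simultaneously — there is no deep difficulty, but the compatibility diagrams are numerous. Since the statement is quoted from \citet[proposition~1.3 page~16]{bournpenon:moncart}, I would at this point simply refer the reader there for the full details rather than reproduce them.
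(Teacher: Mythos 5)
Your proposal is correct and follows essentially the same route as the paper, which simply recalls the second Bourn--Penon construction: your tower $M_0=I\to M_1\to\cdots$ with each stage a coequalizer, the colimit $\Mo(X)=\colim_n M_n$, the multiplication assembled from stagewise maps $M_n\otimes M_p\to M_{n+p}$ (the $k_{n,m}$ of the graded monoid $\Psi_X$), and the universal property checked by induction and colimit-preservation are exactly the paper's $X_n$, $\iota_n$, $q_n$, $k_{n,m}$ apparatus. The only cosmetic difference is your informal use of a coproduct $I\oplus X\otimes M_n$ where the paper glues intrinsically via the coequalizer of $y^0_n,y^1_n$; since $I$ is initial this changes nothing, and both accounts defer to \cite{bournpenon:moncart} for the coherence details.
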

In order to construct this free monoid functor $\Mo(-)$, we use the
notion of graded monoid (defined in
\citet[page~12]{bournpenon:moncart}). A graded monoid in a monoidal
category $\mathcal{V}$ is given by a triple
$((X_n)_{n\in{\mathbb{N}}},(\iota_n)_{n\in{\mathbb{N}}},(k_{n,m})_{n,m\in{\mathbb{N}}})$
where $(X_n)_{n\in{\mathbb{N}}}$ is a family of objects of
$\mathcal{V}$,
$(X_{n}\xrightarrow{\iota_n}X_{n+1})_{n\in{\mathbb{N}}}$ is a family
of morphisms of $\mathcal{V}$, and
$(X_{n}\bigotimes{X_{n}}\xrightarrow{k_{n,m}}X_{n+m})_{n,m\in{\mathbb{N}}}$
is a family of morphisms of $\mathcal{V}$, verifying some axioms that
we can find in \citet[page~12]{bournpenon:moncart}.  In
\cite{bournpenon:moncart} it is proved that every monoid has an
underlying graded monoid and every graded monoid
$((X_n)_{n\in{\mathbb{N}}},(\iota_n)_{n\in{\mathbb{N}}},(k_{n,m})_{n,m\in{\mathbb{N}}})$
is linked with a free monoid. Then the strategy to built the free
monoid $\Mo(X)$ for every $X\in{\mathbb{V}}$ is first to built a graded
monoid $\Psi_{X}$ where this construction also requires the construction by induction
of a secondary family of morphisms
$(X\bigotimes{X_{n}}\xrightarrow{q_{n}}X_{n+1})_{n\in{\mathbb{N}}}$,
then $\Mo(X)$ is also the free monoid associated with this graded
monoid
$\Psi_{X}=((X_n)_{n\in{\mathbb{N}}},(\iota_n)_{n\in{\mathbb{N}}},(k_{n,m})_{n,m\in{\mathbb{N}}})$.
 
We must remember that $\Mo(X)=\colim{X_{n}}$ and
$(X_n)_{n\in{\mathbb{N}}}$ is built by induction with morphisms
$X_{n}\xrightarrow{\iota_n}X_{n+1}$,
$X\bigotimes{X_{n}}\xrightarrow{q_{n}}X_{n+1}$, by considering the
coequalizer $q_{n+1}:=coker(y^0_n,y^1_n)$, where
\begin{itemize}
\item$y^0_n=Id\otimes{\iota_{n}}$,
\item$y^1_n=(\xymatrix{X\bigotimes{X_{n}}\ar[r]^{q_{n}}&X_{n+1}\ar[r]^{u^{-1}_{l}}
    &I\bigotimes{X_{n+1}}\ar[r]^{!\otimes{Id}}&X\bigotimes{X_{n+1}}})$,
\item$\iota_{n+1}=(\xymatrix{X_{n+1}\ar[r]^{u^{-1}_{l}}&I\bigotimes{X_{n+1}}
    \ar[r]^{!\otimes{Id}}&X\bigotimes{X_{n+1}}\ar[r]^{q_{n+1}}&X_{n+2}})$,
\end{itemize}
and where the initialization is given by $X_{0}=I, X_{1}=X$,
$I\xrightarrow{\iota_{0}=!_{X}}X$,\\
$X\bigotimes{I}\xrightarrow{q_{0}=u_{r}}X$.
 
Morphims $k_{n,m}$ are built by induction (see \citet[page~16 and
page~17]{bournpenon:moncart}), but we do not describe it here because
we do not explicitly need them anymore.  Let
$(X_{n}\xrightarrow{l_{n}}\Mo(X))_{n\in{\mathbb{N}}}$, the universal
cocone defining $\Mo(X)$. The associated universal arrow is
$X\xrightarrow{l(X)=l_{1}}\Mo(X)$.  Let us remind that the
multiplication $\Mo(X)\bigotimes{\Mo(X)}\xrightarrow{m}\Mo(X)$ is the
unique arrow such as
$\forall{n,m\in{\mathbb{N}}}$:\hspace{.1cm}$m(l_{n}\otimes
l_{m})=l_{n+m}k_{n,m}$.  When $n=1$ we have $k_{1,m}=q_{m}$ which
gives the equality $m(l_{1}\otimes l_{m})=l_{m+1}q_{m}$ and which will
be useful for the construction of the free overmonoid (see
\cref{result2}).
\subsection{Liberal monoidal overcategories}
\label{contextesurmo}
Let $\mathbb{G}$ be some fixed category.

We shall expand further on the "overmonoidal" context, what we have
made for the monoidal context. 
\begin{remark}
As application we will see in \cite{kach:weak_des_weak}
that the results of $\C$, which has enabled to build the free
contractible operad of weak $\omega$-categories of Batanin (see
\cite{bat:monglob}) are true in $\C/\IG$, which gives us many kind of
free colored operads and especially the free contractible colored operads for weak higher transformations.
\end{remark}
   
Let us now briefly recall the definition of monoidal overcategory.
   
Let $\mathbb{G}$ be a fixed category. An monoidal overcategory (over $\mathbb{G}$) is a
monoidal object of the $2$-category $\C\diagup{\mathbb{G}}$.  A
monoidal overcategory is thus given by a $7$-uple:\hspace{.1cm}
$\mathcal{E}=(\mathbb{E},\mathtt{A},\bigotimes,I,u_{l},u_{r},ass)$
where:
\begin{itemize}
\item $\mathtt{A}$ is a functor:
  $\mathbb{E}\xrightarrow{\mathtt{A}}\mathbb{G}$;
\item
  $(\mathbb{E}\times_{\mathbb{G}}\mathbb{E},\mathtt{A})\xrightarrow{\bigotimes}(\mathbb{E},\mathtt{A})$
  is a morphism of $\C\diagup{\mathbb{G}}$, where
  $\mathbb{E}\times_{\mathbb{G}}\mathbb{E}$ is the kernel pair of $\mathtt{A}$;
\item $\mathbb{G}\xrightarrow{I}\mathbb{E}$ is a functor and a section
  (i.e we have $\mathtt{A}I=1_{\mathbb{G}}$);
\item $u_{r}$ and $u_{l}$ are natural isomorphisms:
  $\bigotimes(1_{\mathbb{E}},I\mathtt{A})\xrightarrow{u_{r}}1_{\mathbb{E}}$,
  $\bigotimes(I\mathtt{A},1_{\mathbb{E}})\xrightarrow{u_{l}}1_{\mathbb{E}}$;
\item $ass$ is a natural isomorphism: $\bigotimes(\bigotimes\times
  1_{\mathbb{E}})\xrightarrow{aso}\bigotimes(1_{\mathbb{E}}\times\bigotimes)$.
\end{itemize}
And these data satisfy the usual conditions of coherence i.e those
given by the axioms of monoidal categories.  A simple consequence of
this definition is that for every object $B$ of $\mathbb{G}$ each
fiber $\mathbb{E}_{B}$ is a monoidal category.  We write with the
same notation in each fiber the tensor product because the context
will prevent any confusion.
\begin{remark}
  Obviously, strict monoidal overcategories means that
  $u_{l},u_{r}$ and $ass$ are natural identities.
\end{remark}
Let $\mathcal{E}=(\mathbb{E},\mathtt{A},\bigotimes,I,u_{l},u_{r},ass)$
and
$\mathcal{E}'=(\mathbb{E}',\mathtt{A}',\bigotimes',I',u_{l}',u_{r}',ass')$
be two monoidal overcategories with respective base categories
$\mathbb{G}$ and $\mathbb{G}'$.  A strict morphism
$\mathcal{E}\xrightarrow{(F,F_{0})}\mathcal{E}'$, is given by two
functors $\mathbb{E}\xrightarrow{F}\mathbb{E}'$ and
$\mathbb{G}\xrightarrow{F_{0}}\mathbb{G}'$ such that
$F_{0}\mathtt{A}=\mathtt{A}'F$, $FI=I'F_{0}$ and
$F\bigotimes=\bigotimes'(F\times_{F_{0}}F)$.
Let $\mathcal{E}$ be a monoidal overcategory. An
overmonoid in $\mathcal{E}$ is given by a pair
$(\mathcal{C};C_{0})$ where $C_{0}\in{\mathbb{G}(0)}$ and
$\mathcal{C}=(C,m,e)$ is a monoid in $\mathbb{E}_{C_{0}}$ ($m$ is the
multiplication and $e$ is the unity). Thus $(\mathcal{C};C_{0})$ is more properly written as
 $(C,m,e;C_{0})$. It is a monoid in a fibre.
  
If $(\mathcal{C};C_{0})$ and $(\mathcal{C}';C_{0}')$ are overmonoids, a
morphism
\[(\mathcal{C};C_{0})\xrightarrow{(f,f_{0})}(\mathcal{C}';C_{0}'),\]
is given by a pair $(f,f_{0})$ where $C_{0}\xrightarrow{f_{0}}C_{0}'$
is an arrow in $\mathbb{G}$ and
$\mathcal{C}\xrightarrow{f}\mathcal{C}'$ is given by an arrow
$C\xrightarrow{f}C'$ in $\mathbb{E}$ such as $\mathtt{A}(f)=f_{0}$ and
$fm=m'(f\otimes_{f_{0}}f)$, $fe=e'I(f_{0})$. We note
$/\mathbb{M}on(\mathbb{E},\mathtt{A})$ the category of overmonoids of 
$\mathcal{E}$.
  
Let $\mathcal{E}$ be a monoidal overcategory. It is
liberal if the following two conditions are satisfied
\begin{itemize}
\item $\forall B\in{\mathbb{G}(0)}$, the fiber $\mathbb{E}_{B}$ is a
  liberal monoidal category.
\item $\forall B\in{\mathbb{G}(0)}$, the canonical inclusion functor
  $\mathbb{E}_{B}\hookrightarrow\mathbb{E}$ preserves coequalizer and
  $\overrightarrow{\mathbb{N}}$-colimits.
\end{itemize}
Let $(\mathcal{C};C_{0})$ be a overmonoid of $\mathcal{E}$,
then
\begin{proposition}
  \label{contextesurmolasliceestsurmo}
  The pair $(\mathbb{E}\diagup{C},\widehat{\mathtt{A}})$, such as
  $\mathbb{E}\diagup{C}\xrightarrow{\widehat{\mathtt{A}}}\mathbb{G}\diagup{\mathtt{A}(C)}$,
  $x\longmapsto\mathtt{A}(x)$, produces a overmonoidal
  overcategory 
  \[\mathcal{E}\diagup{C}=(\mathbb{E}\diagup{C},\widehat{\mathtt{A}},\widehat{\bigotimes},\widehat{I},
  \widehat{u}_{l},\widehat{u}_{r},\widehat{aso})\]
\end{proposition}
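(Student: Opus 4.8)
The plan is to carry over to the overcategorical setting the classical construction of the slice of a monoidal category by a monoid object, tracking arities at every step. Write $\mathbb{H}=\mathbb{G}\diagup\mathtt{A}(C)$ for the base of the prospective overmonoidal overcategory, and recall that $\mathcal{C}=(C,m,e)$ is a monoid in the fiber $\mathbb{E}_{C_{0}}$, whose unit object is $I(C_{0})$; thus $\mathtt{A}(m)=1_{C_{0}}$, $\mathtt{A}(e)=1_{C_{0}}$, and $e\colon I(C_{0})\to C$. First I would define the tensor $\widehat{\bigotimes}$ on the kernel pair $(\mathbb{E}\diagup C)\times_{\mathbb{H}}(\mathbb{E}\diagup C)$ of $\widehat{\mathtt{A}}$: an object there is a pair $(X\xrightarrow{a}C,\ Y\xrightarrow{b}C)$ with $\mathtt{A}(a)=\mathtt{A}(b)$, so in particular $\mathtt{A}(X)=\mathtt{A}(Y)$, whence $(X,Y)$ lies in $\mathbb{E}\times_{\mathbb{G}}\mathbb{E}$ and $(a,b)$ is a morphism there; set
\[\widehat{\bigotimes}(a,b):=\bigl(X\otimes Y\xrightarrow{\ a\otimes b\ }C\otimes C\xrightarrow{\ m\ }C\bigr),\]
and on a morphism $(f,g)$ of the kernel pair let $\widehat{\bigotimes}(f,g):=f\otimes g$, exactly as in the classical case. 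Because $\mathtt{A}(a\otimes b)$ is the common value of $\mathtt{A}(a),\mathtt{A}(b)$ and $\mathtt{A}(m)=1_{C_{0}}$, we get $\widehat{\mathtt{A}}(\widehat{\bigotimes}(a,b))=\mathtt{A}(a)$, so $\widehat{\bigotimes}$ preserves arity; functoriality is inherited from that of $\bigotimes$. Hence $\widehat{\bigotimes}$ is a morphism of $\C\diagup\mathbb{H}$ out of the kernel pair of $\widehat{\mathtt{A}}$, as the definition of monoidal overcategory demands.

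Next I would supply the unit section $\widehat{I}\colon\mathbb{H}\to\mathbb{E}\diagup C$ and the coherence isomorphisms. Since the monoid unit $e$ only lives over $1_{C_{0}}$, it must be spread over all of $\mathbb{H}$: for $g\colon B\to C_{0}$ put $\widehat{I}(g):=\bigl(I(B)\xrightarrow{I(g)}I(C_{0})\xrightarrow{e}C\bigr)$ and $\widehat{I}(\phi):=I(\phi)$ on morphisms; then $\mathtt{A}(e\,I(g))=1_{C_{0}}\circ g=g$ and $I(g'\phi)=I(g')I(\phi)$ make $\widehat{I}$ a functorial section of $\widehat{\mathtt{A}}$. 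For the structure isomorphisms I would take the underlying $\mathbb{E}$-morphisms of $ass$, $u_{l}$, $u_{r}$: given $a,b,c$ over a common $g$, the component $ass_{X,Y,Z}$ of the associator is an isomorphism over $1_{\mathtt{A}(X)}$, and naturality of $ass$ combined with the associativity axiom of the monoid $(C,m,e)$ (namely $m\,(1_{C}\otimes m)\,ass_{C,C,C}=m\,(m\otimes 1_{C})$) shows it is a morphism of $\mathbb{E}\diagup C$ from $\widehat{\bigotimes}(\widehat{\bigotimes}(a,b),c)$ to $\widehat{\bigotimes}(a,\widehat{\bigotimes}(b,c))$, so it defines $\widehat{aso}$; likewise naturality of $u_{r}$ with the right unit axiom $m\,(1_{C}\otimes e)=u_{r}^{C}$, and naturality of $u_{l}$ with the left unit axiom $m\,(e\otimes 1_{C})=u_{l}^{C}$, exhibit $u_{r}^{X}$ and $u_{l}^{X}$ as morphisms of $\mathbb{E}\diagup C$ defining $\widehat{u}_{r}$ and $\widehat{u}_{l}$. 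All three are natural isomorphisms in $\C\diagup\mathbb{H}$ because they already are in $\mathbb{E}$ and their arity components are identities.

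Finally, the pentagon and triangle axioms for $\mathcal{E}\diagup C$ reduce, under the faithful forgetful functor $\mathbb{E}\diagup C\to\mathbb{E}$, to the pentagon and triangle of $\mathcal{E}$, and therefore hold. I expect the only delicate point to be the bookkeeping around the unit: confirming that $\widehat{I}$ is a section over the whole of $\mathbb{H}$ rather than just over $1_{C_{0}}$, and that the resulting $\widehat{u}_{l},\widehat{u}_{r}$ — which involve the spread-out units $e\,I(g)$ — are well defined and natural, which is exactly where one must combine naturality of $u_{l},u_{r}$ with the unit axioms of the overmonoid $(\mathcal{C};C_{0})$. Everything else is the routine transcription of the classical slice construction with arities kept in view; in particular, should one also want $\mathcal{E}\diagup C$ to be liberal when $\mathcal{E}$ is, that follows by applying the corresponding classical fact fiberwise together with the two liberality clauses on $\mathbb{E}$.
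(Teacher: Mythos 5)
Your construction agrees with the paper's: the same tensor $(X,x)\widehat{\bigotimes}(Y,y)=(X\bigotimes Y,\,m(x\otimes y))$, the same unit section $\widehat{I}(b)=e\,I(b)$, and the coherence $2$-cells inherited from those of $\mathcal{E}$ via naturality and the monoid axioms of $(C,m,e)$. The paper merely records these formulas and defers the verification to \cite{bournpenon:moncart}, so your more detailed check of the arity bookkeeping and of the unit/associativity compatibilities is a correct filling-in of essentially the same argument.
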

 
The proof is in~\citet[page~22]{bournpenon:moncart} but let us recall
that if $(X,x),(Y,y)\in\mathbb{E}\diagup{C}$ then
$(X,x)\widehat{\bigotimes}(Y,y):=(X\bigotimes Y,m(x\otimes y))$.  If
$b\in\mathbb{G}\diagup{\mathtt{A}(C)}$ then $\widehat{I}(b):=eI(b)$.
The $2$-cells $\widehat{u}_{l}$, $\widehat{u}_{r}$, $\widehat{aso}$ are
also provided with the corresponding data of $\mathcal{E}$.
 
When overcoequalizers exist in $\mathcal{E}$, it is not difficult to
see that overcoequalizers in $\mathbb{E}\diagup{C}$ are computed by it,
and we have the same phenomenon for
$\overrightarrow{\mathbb{N}}$-overcolimits.  So we have the following
easy proposition that is left for the reader.
\begin{proposition}
  \label{contextesurmolasliceestliberal}
  If $\mathcal{E}=(\mathbb{E},\mathtt{A},\bigotimes,I,u_{l},u_{r},aso)$
  is a liberal monoidal overcategory then
  $\mathcal{E}\diagup{C}=(\mathbb{E}\diagup{C},\widehat{\mathtt{A}},\widehat{\bigotimes},\widehat{I},
  \widehat{u}_{l},\widehat{u}_{r},\widehat{aso})$ is a liberal
  monoidal overcategory, and the morphism
  $\mathcal{E}\diagup{C}\xrightarrow{(S,S_{0})}\mathcal{E}$ given by
  the functor $\mathbb{E}\diagup{C}\xrightarrow{S}\mathbb{E}$,
  $(X,x)\longmapsto X$, is a strict morphism of overmonoidal
  overcategories which preserves the liberal structure.
\end{proposition}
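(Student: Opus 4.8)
The plan is to reduce the proposition, fiber by fiber, to two standard facts: that the forgetful functor of a slice category creates colimits, and that a slice of a liberal monoidal category is again liberal (which is essentially the computation underlying \cref{theoremdufreemon}). I would begin by describing the fiber of $\widehat{\mathtt{A}}$ over an object $b\colon B\to\mathtt{A}(C)$ of $\mathbb{G}\diagup\mathtt{A}(C)$. An object of $(\mathbb{E}\diagup C)_{b}$ is a pair $(X,x)$ with $X\in\mathbb{E}_{B}$ and $x\colon X\to C$ a morphism of $\mathbb{E}$ such that $\mathtt{A}(x)=b$, and a morphism is a morphism of $\mathbb{E}_{B}$ commuting with the maps to $C$; its (strict) monoidal structure is the one induced by $\widehat{\bigotimes}$ and $\widehat{I}$. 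Since $\mathcal{E}\diagup C$ is already an overmonoidal overcategory by \cref{contextesurmolasliceestsurmo}, this fiber is a monoidal category, and $(X,x)\mapsto X$ is the restriction of $S$ to a strict monoidal functor $S_{b}\colon(\mathbb{E}\diagup C)_{b}\to\mathbb{E}_{B}$ (strictness from $(X,x)\widehat{\bigotimes}(Y,y)=(X\bigotimes Y,m(x\otimes y))$ and $\widehat{I}(b)=eI(b)$).

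The technical heart is the claim that $S_{b}$ \emph{creates} $\overrightarrow{\mathbb{N}}$-colimits and coequalizers and that the inclusion $(\mathbb{E}\diagup C)_{b}\hookrightarrow\mathbb{E}\diagup C$ preserves them. Here I would use that $S\colon\mathbb{E}\diagup C\to\mathbb{E}$ creates colimits, that $\mathbb{E}_{B}$ (being a liberal monoidal category, since $\mathcal{E}$ is liberal) has $\overrightarrow{\mathbb{N}}$-colimits and coequalizers, and that $\mathbb{E}_{B}\hookrightarrow\mathbb{E}$ preserves them. Given a diagram $D$ in $(\mathbb{E}\diagup C)_{b}$ of shape $\overrightarrow{\mathbb{N}}$ or of a parallel pair, $S_{b}D$ has a colimit $L$ in $\mathbb{E}_{B}$, which remains a colimit of $S_{b}D$ in $\mathbb{E}$; since $S$ creates colimits, $D$ acquires a colimit $(L,\ell)$ in $\mathbb{E}\diagup C$, with $\ell\colon L\to C$ the unique arrow induced by the structure maps of $D$. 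The one delicate point --- and the main obstacle of the argument --- is to check that $(L,\ell)$ lies in the fiber over $b$: the legs of the colimiting cocone of $S_{b}D$ all have arity $1_{B}$, whereas the structure maps of $D$ all have arity $b$, and these force $\mathtt{A}(\ell)=b$. A short uniqueness argument (using that fiber morphisms are in particular morphisms of $\mathbb{E}\diagup C$) then shows $(L,\ell)$ is also the colimit of $D$ inside $(\mathbb{E}\diagup C)_{b}$. This yields simultaneously that $(\mathbb{E}\diagup C)_{b}$ has $\overrightarrow{\mathbb{N}}$-colimits and coequalizers, that $S_{b}$ creates them, and that $(\mathbb{E}\diagup C)_{b}\hookrightarrow\mathbb{E}\diagup C$ preserves them --- the last being precisely the second clause in the definition of a liberal monoidal overcategory for $\mathcal{E}\diagup C$.

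To finish the liberality of $\mathcal{E}\diagup C$ I would check that each fiber $(\mathbb{E}\diagup C)_{b}$ is itself liberal, i.e.\ that $(-)\widehat{\bigotimes}(X,x)$ and $(X,x)\widehat{\bigotimes}(-)$ preserve $\overrightarrow{\mathbb{N}}$-colimits and $(-)\widehat{\bigotimes}(X,x)$ preserves coequalizers. Since $S_{b}$ is strict monoidal, $S_{b}\big((-)\widehat{\bigotimes}(X,x)\big)=\big(S_{b}(-)\big)\otimes X$; liberality of $\mathbb{E}_{B}$ makes the right-hand functor preserve $\overrightarrow{\mathbb{N}}$-colimits and coequalizers, and since $S_{b}$ creates these colimits the preservation lifts back along $S_{b}$; the symmetric argument handles $(X,x)\widehat{\bigotimes}(-)$.

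It remains to treat the morphism $(S,S_{0})$, with $S_{0}\colon\mathbb{G}\diagup\mathtt{A}(C)\to\mathbb{G}$ the domain functor. That it is a strict morphism of overmonoidal overcategories is a direct unwinding of the formulas recalled after \cref{contextesurmolasliceestsurmo}: $\mathtt{A}S=S_{0}\widehat{\mathtt{A}}$ because $\widehat{\mathtt{A}}(X,x)=\mathtt{A}(x)$; $S\widehat{I}=IS_{0}$ because $\widehat{I}(b)=eI(b)$ has domain $I(B)$; $S\widehat{\bigotimes}=\bigotimes(S\times_{S_{0}}S)$ by the formula for $\widehat{\bigotimes}$; and $S$ carries $\widehat{u}_{l},\widehat{u}_{r},\widehat{aso}$ to $u_{l},u_{r},aso$ since the former are by construction the components of the latter. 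Finally, that $(S,S_{0})$ preserves the liberal structure is exactly what the two middle paragraphs show: fiberwise it is given by the functors $S_{b}$, which preserve (indeed create) $\overrightarrow{\mathbb{N}}$-colimits and coequalizers compatibly with the inclusions into the ambient categories. Every step other than the fiber bookkeeping singled out above is a routine transcription of a classical fact about slice categories and monoidal categories.
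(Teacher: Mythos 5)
Your argument is correct and matches the paper's intent: the paper gives no written proof of this proposition, only the preceding remark that overcoequalizers and $\overrightarrow{\mathbb{N}}$-overcolimits in $\mathbb{E}\diagup{C}$ are computed in $\mathcal{E}$, and your fiberwise creation argument (in particular the check that the induced map $\ell$ has arity $b$, and the uniqueness argument placing the colimit in the fiber) is exactly the verification being left to the reader. One cosmetic slip: the monoidal structure on the fiber $(\mathbb{E}\diagup{C})_{b}$ is not strict in general (its coherence cells are inherited from $u_{l}$, $u_{r}$, $aso$), but nothing in your proof uses strictness of the fiber, only strictness of the monoidal functor $S_{b}$.
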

We have the following proposition too
\begin{proposition} 
  \label{strictmorphismofovermonoidalcategories}
  If $\mathcal{E}$ is a liberal monoidal overcategory and if \\
  $(\mathcal{C};C_{0})\xrightarrow{(h,h_{0})}(\mathcal{C}';C_{0}')$ is
  a morphism of overmonoids, then the morphism
  \[\xymatrix{\mathcal{E}\diagup{C}\ar[r]^{(h^*,h_{0}^*)}&\mathcal{E}\diagup{C'}}\]
  is a strict morphism of monoidal overcategories which preserves
  the liberal structure.
\end{proposition}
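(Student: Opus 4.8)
The plan is to take $h^*$ to be post-composition with $h$ and $h_0^*$ to be post-composition with $h_0$: on objects $h^*(X,x)=(X,hx)$, and for an arrow $f\colon(X,x)\to(Y,y)$ of $\mathbb{E}\diagup C$ (so $yf=x$) one sets $h^*(f)=f$, which is admissible because $(hy)f=h(yf)=hx$; likewise $h_0^*$ sends $b\colon B\to C_0$ to $h_0b$. These are plainly functors, so the first thing to check is that $(h^*,h_0^*)$ is a morphism of overcategories, i.e.\ that $\widehat{\mathtt{A}}'h^*=h_0^*\widehat{\mathtt{A}}$: on $(X,x)$ both sides equal $\mathtt{A}(h)\,\mathtt{A}(x)=h_0\,\mathtt{A}(x)$ by the axiom $\mathtt{A}(h)=h_0$ for a morphism of overmonoids, and the computation on arrows is the same. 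In particular $h^*$ carries the fibre of $\widehat{\mathtt{A}}$ over $b$ into the fibre of $\widehat{\mathtt{A}}'$ over $h_0b$.

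Next I would verify the two remaining equations in the definition of a strict morphism of monoidal overcategories. For the unit, on $b\colon B\to C_0$ one has $h^*\widehat{I}(b)=h\circ eI(b)$ while $\widehat{I}'h_0^*(b)=e'I(h_0b)=e'I(h_0)I(b)$, and these coincide precisely by the unit axiom $he=e'I(h_0)$; the verification on arrows of $\mathbb{G}\diagup C_0$ is identical. For the tensor, given $((X,x),(Y,y))$ in $(\mathbb{E}\diagup C)\times_{\mathbb{G}\diagup C_0}(\mathbb{E}\diagup C)$ one computes $h^*\widehat{\bigotimes}((X,x),(Y,y))=(X\bigotimes Y,\,hm(x\otimes y))$ and $\widehat{\bigotimes}'(h^*\times_{h_0^*}h^*)((X,x),(Y,y))=(X\bigotimes Y,\,m'((hx)\otimes(hy)))$; functoriality of $\bigotimes$ gives $(hx)\otimes(hy)=(h\otimes h)(x\otimes y)$, so the multiplicativity axiom $hm=m'(h\otimes_{h_0}h)$ equates the two structure maps, and the same manipulation handles arrows. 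Finally, the coherence cells $\widehat{u}_l,\widehat{u}_r,\widehat{aso}$ of $\mathcal{E}\diagup C$ and the primed ones of $\mathcal{E}\diagup C'$ are those induced by the coherence cells of $\mathcal{E}$, and $h^*$ alters only the structure morphisms and not the underlying $\mathbb{E}$-data, so $h^*$ carries the coherence data of $\mathcal{E}\diagup C$ onto that of $\mathcal{E}\diagup C'$. Hence $(h^*,h_0^*)$ is a strict morphism of monoidal overcategories.

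It then remains to see that $h^*$ preserves the liberal structure, i.e.\ preserves overcoequalizers and $\overrightarrow{\mathbb{N}}$-overcolimits. By \cref{contextesurmolasliceestliberal}, both $\mathcal{E}\diagup C$ and $\mathcal{E}\diagup C'$ are liberal, and --- by the observation recalled just before that proposition --- their overcoequalizers and $\overrightarrow{\mathbb{N}}$-overcolimits are created by the forgetful morphisms $(S,S_{0})\colon\mathcal{E}\diagup C\to\mathcal{E}$ and $(S',S_{0}')\colon\mathcal{E}\diagup C'\to\mathcal{E}$. Since $(S',S_{0}')\circ(h^*,h_0^*)=(S,S_{0})$ (both send $(X,x)$ to $X$ and $b$ to its domain) and $S$ preserves those overcolimits while $S'$ creates, hence reflects, them, it follows that $h^*$ preserves overcoequalizers and $\overrightarrow{\mathbb{N}}$-overcolimits, which is exactly preservation of the liberal structure. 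The first two steps are purely bookkeeping with the three overmonoid-morphism axioms; the only input that goes beyond formal manipulation is that over-colimits in the slices are computed in $\mathcal{E}$, established before \cref{contextesurmolasliceestliberal}, and with that in hand the argument parallels the proof of \cref{contextesurmolasliceestliberal} itself, so I do not expect a serious obstacle.
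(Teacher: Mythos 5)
Your proof is correct and matches the paper's approach in substance: the paper simply cites Bourn--Penon for the strict-morphism and $\overrightarrow{\mathbb{N}}$-overcolimit parts and declares the overcoequalizer part ``evident by construction,'' which is exactly the direct verification you carry out (post-composition with $h$, the three overmonoid-morphism axioms for the unit/tensor/arity compatibilities, and the fact that $S$ and $S'$ create the relevant overcolimits so that $S'h^{*}=S$ forces preservation). No discrepancy to report.
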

\begin{proof}
  The fact that $(h^*,h_{0}^*)$ is a strict morphism of overmonoidal
  overcategories has already been
  shown (\citet[page~25]{bournpenon:moncart}) and the fact that $h^*$
  preserves $\overrightarrow{\mathbb{N}}$-overcolimits has already been
  proved for the numeral context~\citep[see][page
  25]{bournpenon:moncart}.  We only have to show that $h^*$ preserves
  overcoequalizers and it is evident by construction.
\end{proof}
 
Now we have enough material to give the main theorem of this paragraph.
\begin{theorem}
  \label{theoremdufreesurmonoid}
  Let
  $\mathcal{E}=(\mathbb{E},\mathtt{A},\bigotimes,I,u_{l},u_{r},aso)$
  be a liberal monoidal overcategory such that $\forall
  B\in\mathbb{G}(0)$ the object $I(B)$ is initial in the fiber
  $\mathbb{E}_{B}$ and such that $\forall b\in\mathbb{G}(1)$ the object
  $I(b)$ is initial in the fiber $\mathbb{E}_{b}$, then the forgetful
  overfunctor
  \[\xymatrix{(/\mathbb{M}on(\mathbb{E},\mathtt{A}),\mathtt{A})\ar[r]^(.6){U}&(\mathbb{E},\mathtt{A})}\] 
  has a left overadjoint $M\dashv U$ and it is overmonadic.
\end{theorem}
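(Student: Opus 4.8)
The plan is to reduce the overcategorical statement to a fibrewise statement plus a check on the base category, exploiting the fact that each fibre $\mathbb{E}_{B}$ is a liberal monoidal category with $I(B)$ initial, so that \cref{theoremdufreemon} applies fibrewise to produce a free monoid functor $\Mo_{B}$ on $\mathbb{E}_{B}$. First I would assemble these fibrewise free monoids into an overfunctor $M\colon(\mathbb{E},\mathtt{A})\to(/\mathbb{M}on(\mathbb{E},\mathtt{A}),\mathtt{A})$. On objects, an object $X\in\mathbb{E}$ lying over $B=\mathtt{A}(X)$ is sent to $(\Mo_{B}(X);B)$, the free monoid of $X$ computed inside the fibre $\mathbb{E}_{B}$. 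The point to verify here is that this construction is natural with respect to morphisms of $\mathbb{E}$ which do \emph{not} stay in a single fibre: given $f\colon X\to X'$ over $f_{0}\colon B\to B'$ in $\mathbb{G}(1)$, one uses that $I(b)$ is initial in the fibre $\mathbb{E}_{b}$ over each arrow $b$ of $\mathbb{G}$ (this is the second hypothesis of the theorem, which was not needed for the fibrewise statement) to push the graded-monoid data $(X_{n},\iota_{n},k_{n,m})$ of $X$ forward along $f$; more precisely, the fibre $\mathbb{E}_{b}$ over the arrow $b=f_{0}$ is itself a (liberal, by the preservation hypotheses on the inclusions $\mathbb{E}_{B}\hookrightarrow\mathbb{E}$) monoidal category with initial unit, and the whole inductive construction recalled above — the $X_{n}$, the $q_{n}$, the coequalizers $q_{n+1}=\mathrm{coker}(y^{0}_{n},y^{1}_{n})$, and the colimit $\Mo(X)=\colim X_{n}$ — is functorial in $X$ within that ambient fibre, which gives the morphism $M(f)\colon(\Mo_{B}(X);B)\to(\Mo_{B'}(X');B')$ over $f_{0}$.

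Next I would establish the overadjunction $M\dashv U$. The unit at $X$ is the universal arrow $l(X)=l_{1}\colon X\to U\Mo_{B}(X)$ coming from the fibrewise free-monoid construction; since $\mathtt{A}(l(X))=1_{B}$, this is a morphism over an identity of $\mathbb{G}$, which is exactly what an overadjunction unit must be. The universal property to check is: given any overmonoid $(\mathcal{D};D_{0})$ and any morphism $g\colon X\to U(\mathcal{D};D_{0})$ in $\mathbb{E}$ over some $g_{0}\colon B\to D_{0}$ in $\mathbb{G}$, there is a unique morphism of overmonoids $(\mathcal{C};B)=M(X)\to(\mathcal{D};D_{0})$ over $g_{0}$ whose underlying map composed with $l(X)$ gives $g$. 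The strategy for this is the one the theorem statement itself hints at: factor $g_{0}$ through the slice. By \cref{contextesurmolasliceestsurmo} and \cref{contextesurmolasliceestliberal}, the slice $\mathcal{E}\diagup D$ of $\mathcal{E}$ along the monoid $\mathcal{D}$ is again a liberal monoidal overcategory, with $\widehat{I}$ initial in each fibre, and the projection $(S,S_{0})\colon\mathcal{E}\diagup D\to\mathcal{E}$ is a strict liberal morphism; likewise \cref{strictmorphismofovermonoidalcategories} handles base change along morphisms of overmonoids. One then transports the problem into the appropriate slice, where $g$ becomes a map into the \emph{terminal} overmonoid $\widehat{I}$, and the universal property of the fibrewise free monoid (together with strictness of $S$, which guarantees it commutes with $\bigotimes$, $I$, and all the colimits defining $\Mo$) yields the unique extension. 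Because every map in sight lies over a single arrow $g_{0}$ of $\mathbb{G}$, the resulting universal property is automatically the overcategorical one.

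Finally I would prove that $M\dashv U$ is overmonadic by appealing to Beck's Overmonadicity Theorem (\cref{surbeck}): it suffices to show that $U$ creates overcoequalizers of those parallel pairs in $/\mathbb{M}on(\mathbb{E},\mathtt{A})$ whose $U$-images admit a split (equivalently absolute) overcoequalizer. Such a pair necessarily lies in a single fibre $\mathbb{E}_{B}$ — here one uses \cref{splitsurfork}, that split overforks are forks in a fibre — so the question reduces to the classical fact that $\mathbb{M}on(\mathbb{E}_{B})\to\mathbb{E}_{B}$ is monadic, which holds because $\mathbb{E}_{B}$ is liberal with initial unit; lifting the created coequalizer back up the inclusion $\mathbb{E}_{B}\hookrightarrow\mathbb{E}$ (which preserves the relevant colimits, by liberality of $\mathcal{E}$) gives the overcoequalizer in $/\mathbb{M}on(\mathbb{E},\mathtt{A})$ over $1_{B}$.

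I expect the main obstacle to be the second paragraph — making the transport-into-the-slice argument genuinely rigorous, i.e. checking that the strict monoidal morphisms $S$ and $h^{*}$ interact correctly with the \emph{inductively defined} pieces $q_{n}$ and the coequalizers $q_{n+1}$ of the free-monoid construction, so that a map out of $\Mo_{B}(X)$ is pinned down exactly as in \cref{theoremdufreemon}. The first and third paragraphs are essentially bookkeeping: the first is fibre-by-fibre functoriality plus the role of the arrow-fibres $\mathbb{E}_{b}$, and the third is a direct invocation of \cref{surbeck} together with \cref{splitsurfork}. Everything hard has been arranged to happen inside an ordinary liberal monoidal category, where \cref{theoremdufreemon} is available.
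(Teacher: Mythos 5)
Your proposal follows essentially the same route as the paper: the paper's own proof is itself only a sketch that invokes \cref{theoremdufreemon} fibrewise, the slice machinery of \cref{contextesurmolasliceestsurmo}, \cref{contextesurmolasliceestliberal} and \cref{strictmorphismofovermonoidalcategories}, two inductive compatibility results for the graded-monoid data (precisely the interaction of $S$ and $h^{*}$ with the inductively defined $q_{n}$ and coequalizers that you single out as the main obstacle), and \cref{surbeck} for overmonadicity. The one slip is your description of $\widehat{I}$ as \emph{terminal} in the slice $\mathcal{E}\diagup D$ --- it is the \emph{initial} unit object there, which is exactly what makes \cref{theoremdufreemon} applicable in the slice --- but this does not change the argument.
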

\begin{proof}
 It is similar to the proof of \cref{theoremdufreemon} and we just need to adapt it to the
 overcategorical context. In particular we use \cref{theoremdufreemon}, 
 the reminders in \cref{LiberalMonoidalCategories}, \cref{strictmorphismofovermonoidalcategories}
 plus the following two results (The first result below is a refinement of
  \cref{strictmorphismofovermonoidalcategories}. We prove these two results by induction):
  \begin{result}
    \label{result1}
    Let
    $(\mathcal{C};C_{0})\xrightarrow{(h,h_{0})}(\mathcal{C}';C_{0}')$
    be a morphism of overmonoids, then if
    $(X,x)\in\mathbb{E}\diagup{C}$ then $\forall n\in\mathbb{N}$,
    $h^*((X,x)_{n})=(h^*((X,x)))_{n}$, where $(X,x)_{n}$ is the
    $n^{\text{th}}$ object of the graded monoid associated with
    $(X,x)$ (see \citep[1.2.3 page 12]{bournpenon:moncart} for
    definition and results about graded monoids).
  \end{result}
  \begin{result}
    \label{result2}
    $\forall n\in\mathbb{N}$, $(X,l(X))_{n}=(X_{n},l_{n})$, where
    $X_{n}\xrightarrow{l_{n}}\Mo(X)$ is an arrow of the colimit cocone
    defining $\Mo(X)$, and where $(X,l(X))_{n}$ is the $n^{\text{th}}$
    object of the graded monoid associated with $(X,l(X))$.
  \end{result}
 
  The overmonadicity of $U$ is a simple consequence of \cref{surbeck}.
  In particular this overmonadicity has already been proved in the
  numeral context~\citep[see proposition 1.3.1, page
  20]{bournpenon:moncart}.
 \end{proof}
 
Now we can study the important case of pointed overmonoidal
overcategories.
\label{PointedSurmonoidal}
In \cite{bournpenon:moncart} it is proved that to any monoidal
category $\mathcal{V}=(\mathbb{V},\bigotimes,I,u_{l},u_{r},aso)$ we
associate its pointed monoidal category
\[Pt(\mathcal{V})=(Pt(\mathbb{V}),\widetilde{\bigotimes},\widetilde{I}
,\widetilde{u}_{l},\widetilde{u}_{r},\widetilde{aso})\] and if
$\mathcal{V}$ was liberal then $Pt(\mathcal{V})$ remained liberal.

We can expand to the overmonoidal context this construction and this
result.  Let
$\mathcal{E}=(\mathbb{E},\mathtt{A},\bigotimes,I,u_{l},u_{r},aso)$ be
a monoidal overcategory over a fixed category $\mathbb{G}$.

Let $Pt(\mathbb{E})$ the category with objects the pairs $(X,x)$ where
$X\in{\mathbb{E}(0)}$ and $I(\mathtt{A}(X))\xrightarrow{x}X\in\mathbb{E}(1)$, and which has for arrows $(X,x)\xrightarrow{f}(Y,y)$,
given by morphism $X\xrightarrow{f}Y$ of $\mathbb{E}$ such as
$fx=yI_{\mathtt{A}(f)}$. In this case we have
\begin{proposition}
  \label{laPointedestSurmonoidal}
  The pair $(Pt(\mathbb{E}),\widetilde{\mathtt{A}})$ such that:
  $\xymatrix{(X,x)\ar@{|->}[r]^{\widetilde{\mathtt{A}}}&\mathtt{A}(X)}$,
  produces a structure of monoidal overcategory
  \[Pt(\mathcal{E})=(Pt(\mathbb{E}),\widetilde{\mathtt{A}},\widetilde{\bigotimes},\widetilde{I},\widetilde{u}_{l},\widetilde{u}_{r},\widetilde{aso})\]
\end{proposition}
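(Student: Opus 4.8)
\subsection*{Proof proposal}

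The plan is to transcribe, fiber by fiber, the classical construction of the pointed monoidal category $Pt(\mathcal{V})$ recalled just above \cite{bournpenon:moncart}, and then to check that the resulting data genuinely assemble into a $7$-uple living in $\C\diagup\mathbb{G}$. The starting remark is that, for each $B\in\mathbb{G}(0)$, the fiber of $(Pt(\mathbb{E}),\widetilde{\mathtt{A}})$ over $B$ is the classical pointed monoidal category $Pt(\mathbb{E}_B)$ of the monoidal fiber $\mathbb{E}_B$: an object $(X,x)$ with $\widetilde{\mathtt{A}}(X,x)=B$ is an object $X$ of $\mathbb{E}_B$ together with a morphism $x\colon I(B)\to X$ from its unit $I(B)$, and a morphism over $1_B$ satisfies $fx=yI(1_B)=y$, which is the classical pointing condition. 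Since each $\mathbb{E}_B$ is monoidal, \cite{bournpenon:moncart} already endows every fiber with a monoidal structure; the work is to see that these structures are compatible over $\mathbb{G}$, i.e. that $\widetilde{\bigotimes}$, $\widetilde{I}$, $\widetilde{u}_l$, $\widetilde{u}_r$, $\widetilde{aso}$ are, respectively, a morphism, a section, and $2$-cells of $\C\diagup\mathbb{G}$, and that they satisfy the monoidal coherence.

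For the tensor I would set, on an object $\bigl((X,x),(Y,y)\bigr)$ of the kernel pair of $\widetilde{\mathtt{A}}$ (so $\mathtt{A}(X)=\mathtt{A}(Y)=:B$),
\[(X,x)\,\widetilde{\bigotimes}\,(Y,y):=\bigl(X\bigotimes Y,\ (x\otimes y)\circ\xi_B\bigr),\]
where $\xi_B\colon I(B)\xrightarrow{\ \sim\ }I(B)\bigotimes I(B)$ is the inverse of the component at $I(B)$ of $u_l$ (equivalently of $u_r$: the two agree by the coherence theorem applied in $\mathbb{E}_B$), and on arrows $\widetilde{\bigotimes}$ is simply $\bigotimes$ of the underlying arrows. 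I would check that this lands in $Pt(\mathbb{E})$, is functorial, and is compatible with the arity functors, the last point holding because $\bigotimes$ is already a morphism over $\mathbb{G}$, so that $\mathtt{A}(X\bigotimes Y)=\mathtt{A}(X)$. For the unit I would take $\widetilde{I}(B):=(I(B),1_{I(B)})$ and $\widetilde{I}(b):=I(b)$ for $b\in\mathbb{G}(1)$; that $I(b)$ is a morphism of pointed objects is immediate from $\mathtt{A}(I(b))=b$, and $\widetilde{\mathtt{A}}\widetilde{I}=1_{\mathbb{G}}$ follows from $\mathtt{A}I=1_{\mathbb{G}}$, so $\widetilde{I}$ is a section of $\widetilde{\mathtt{A}}$.

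It remains to produce $\widetilde{u}_l$, $\widetilde{u}_r$, $\widetilde{aso}$. For each I would keep the same component arrows as for $u_l$, $u_r$, $ass$ in $\mathcal{E}$ and verify that they commute with the pointings, hence are morphisms of $Pt(\mathbb{E})$; for instance for $\widetilde{u}_l$ one needs $u_{l,X}\circ(1_{I(B)}\otimes x)\circ\xi_B=x$, which follows from naturality of $u_l$ at the fiber arrow $x\colon I(B)\to X$ together with $u_{l,I(B)}\circ\xi_B=1_{I(B)}$, and the cases of $\widetilde{u}_r$ and $\widetilde{aso}$ are the analogous diagram chases — exactly the classical verification that $Pt(\mathcal{V})$ is monoidal, now performed inside the monoidal fiber $\mathbb{E}_{\mathtt{A}(W)}$ and using only the coherence of $\mathcal{E}$. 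Naturality of $\widetilde{u}_l$, $\widetilde{u}_r$, $\widetilde{aso}$ in $Pt(\mathbb{E})$, and the pentagon and triangle identities for $(\widetilde{\bigotimes},\widetilde{I},\widetilde{u}_l,\widetilde{u}_r,\widetilde{aso})$, then transfer from $\mathcal{E}$ since the forgetful functor $Pt(\mathbb{E})\to\mathbb{E}$ is faithful; and all three $2$-cells project to identities on $\mathbb{G}$ because $u_l$, $u_r$, $ass$ do, so they are $2$-cells of $\C\diagup\mathbb{G}$. The only step calling for real care — the ``main obstacle'' — is the compatibility of the structure isomorphisms with the pointings of tensor objects; once a coherent convention for the point $\xi_B$ of a tensor is fixed (as in \cite{bournpenon:moncart}), all these checks go through uniformly, and the remainder is bookkeeping.
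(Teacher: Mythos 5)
Your construction coincides with the paper's own proof: the same tensor $(X,x)\widetilde{\bigotimes}(Y,y)=(X\bigotimes Y,(x\otimes y)u_l^{-1})$ (your $\xi_B$ is exactly the component of $u_l^{-1}$ at $I(B)$), the same unit $\widetilde I(G)=(I(G),1_{I(G)})$, and the same verification that $u_l$, $u_r$, $aso$ descend to $Pt(\mathbb{E})$ via their naturality and the monoid structure of the unit object. The additional remarks (fiberwise identification with $Pt(\mathbb{E}_B)$, coherence transferring along the faithful forgetful functor) are harmless elaborations of what the paper leaves implicit, so the proposal is correct and essentially identical in approach.
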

\begin{proof}
  \begin{itemize}
  \item Its tensor is the bifunctor
    $Pt(\mathbb{E})\times_{\mathbb{G}}Pt(\mathbb{E})\xrightarrow{\widetilde{\bigotimes}}Pt(\mathbb{E})$,\\
    $((X,x),(Y,y))\longmapsto
    (X,x)\widetilde{\bigotimes}(Y,y):=(X\bigotimes Y,(x\otimes
    y)u^{-1}_{l})$.
  \item Its "unity" functor is
    $\mathbb{G}\xrightarrow{\widetilde{I}}Pt(\mathbb{E})$,
    $G\longmapsto (I(G),1_{I(G)})$.
  \item Left and right isomorphisms of unity: For all $(X,x)$ of
    $Pt(\mathbb{E})(0)$ the tensor
    $\widetilde{I}(\widetilde{\mathtt{A}}(X,x))
    \widetilde{\bigotimes}(X,x)$ is given by the morphism
    $(1_{I(\mathtt{A}(X))}\otimes x)u^{-1}_{l}$ of $\mathbb{E}$, and
    we have $u_{l}(1_{I(\mathtt{A}(X))}\otimes x)u^{-1}_{l}=x$ thanks
    to the equality
    \[u_{l}(X)(1_{I(\mathtt{A}(X))}\otimes x)
    =xu_{l}(I(\mathtt{A}(X))).\] Thus we get
    \[\xymatrix{\widetilde{I}(\widetilde{\mathtt{A}}(X,x))\widetilde{\bigotimes}(X,x)
      \ar[rr]^(.6){\widetilde{u}_{l}(X,x)}&&(X,x)}\]
    and $\widetilde{u}_{l}(X,x)$ given by $u_{l}(X)$ is a good
    candidate to define $\widetilde{u}_{l}$.  Thus we obtain the
    natural transformation
    $\widetilde{\bigotimes}(\widetilde{I}\widetilde{\mathtt{A}},Id)\xrightarrow{\widetilde{u}_{l}}Id$
    which is in fact, an underlying datum of its $2$-cell
    $\widetilde{u}_{l}$.  In the same way we obtain the $2$-cell
    $\xymatrix{\widetilde{\bigotimes}(Id,\widetilde{I}\widetilde{\mathtt{A}})
      \ar@{=>}[r]^(.6){\widetilde{u}_{r}}&Id}$.
  \item The tensor products
    \[((X,x)\widetilde{\bigotimes}(Y,y))\widetilde{\bigotimes}(Z,z) 
    \hspace{.1cm}\text{and}\hspace{.1cm}(X,x)\widetilde{\bigotimes}((Y,y)\widetilde{\bigotimes}
    (Z,z))\] are respectively given by
    \[[((x\otimes y)u^{-1}_{l})\otimes
    z]u^{-1}_{l}\hspace{.1cm}\text{and}\hspace{.1cm}[x\otimes((y\otimes
    z)u^{-1}_{l})]u^{-1}_{l},\] and we have the equality
    \[aso[((x\otimes y)u^{-1}_{l})\otimes
    z]u^{-1}_{l}=[x\otimes((y\otimes z)u^{-1}_{l})]u^{-1}_{l}\] due to
    the naturality of $aso$ and the underlying overmonoid structure of
    $I(\mathtt{A}(X))$.  We consequently obtain
    \[\xymatrix{((X,x)\widetilde{\bigotimes}(Y,y))\widetilde{\bigotimes}(Z,z)\ar[r]^{\widetilde{aso}}&
      (X,x)\widetilde{\bigotimes}((Y,y)\widetilde{\bigotimes}(Z,z))}\]
    where in particular $\widetilde{aso}$ is given by $aso$, and is
    the good candidate to be the $2$-cells of associativity.  Thus we
    obtain the natural transformation
    $\widetilde{\bigotimes}(\widetilde{\bigotimes}\times
    Id)\xrightarrow{\widetilde{aso}}
    \widetilde{\bigotimes}(Id\times\widetilde{\bigotimes})$ which in
    reality is an underlying datum of the $2$-cell $\widetilde{aso}$,
    and with this description of $Pt(\mathcal{E})$ it is now easy to
    see that it is a monoidal overcategory.
  \end{itemize}
\end{proof}
As for $\mathcal{E}\diagup{C}$, when overcoequalizers exist in
$\mathcal{E}$, then we can see that overcoequalizers in
$Pt(\mathcal{E})$ are computed by it, and we have the same phenomenon
for $\overrightarrow{\mathbb{N}}$- overcolimits. So we have the
following easy proposition.
\begin{proposition}
  \label{laPointedestLiberal}
  If
  $\mathcal{E}=(\mathbb{E},\mathtt{A},\bigotimes,I,u_{l},u_{r},aso)$
  is a liberal monoidal overcategory then
  $Pt(\mathcal{E})=(Pt(\mathbb{E}),\widetilde{\mathtt{A}},\widetilde{\bigotimes},\widetilde{I},\widetilde{u}_{l},\widetilde{u}_{r},\widetilde{aso})$
  stays a liberal overmonoidal category, and trivially the functor
  $\widetilde{I}$ send objects and arrows of $\mathbb{G}$ to initial
  objects in the corresponding fibers.
\end{proposition}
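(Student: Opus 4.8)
The plan is to reduce the statement to the fibres, using two facts already available in the excerpt: the Bourn--Penon result, recalled just before the proposition, that the pointed category of a liberal monoidal category is again liberal; and the observation, also just recorded, that overcoequalizers and $\overrightarrow{\mathbb{N}}$-overcolimits in $Pt(\mathcal{E})$ are computed as in $\mathcal{E}$.

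First I would identify the fibres of $Pt(\mathcal{E})$. For $B\in\mathbb{G}(0)$, an object $(X,x)$ of $Pt(\mathbb{E})$ lying over $B$ is exactly an object $X$ of $\mathbb{E}_{B}$ together with a morphism $x\colon I(B)\to X$ of $\mathbb{E}_{B}$ (since $\mathtt{A}I=1_{\mathbb{G}}$ forces $\mathtt{A}(x)=1_{B}$); a morphism over $1_{B}$ is exactly a morphism of pointed objects of $\mathbb{E}_{B}$; and $\widetilde{\bigotimes}$, $\widetilde{I}$ restrict in the fibre to the pointed tensor and the pointed unit. Hence $Pt(\mathbb{E})_{B}=Pt(\mathbb{E}_{B})$ as a monoidal category, and this monoidal category is nothing but the coslice $I(B)/\mathbb{E}_{B}$. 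Since $\mathcal{E}$ is liberal, $\mathbb{E}_{B}$ is a liberal monoidal category, so $Pt(\mathbb{E}_{B})$ is liberal by the Bourn--Penon result \cite{bournpenon:moncart}; this gives the first defining clause of liberality for $Pt(\mathcal{E})$.

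Next I would check that the canonical inclusion $Pt(\mathbb{E})_{B}=Pt(\mathbb{E}_{B})\hookrightarrow Pt(\mathbb{E})$ preserves coequalizers and $\overrightarrow{\mathbb{N}}$-colimits. I would use that the coslice projection $Pt(\mathbb{E}_{B})=I(B)/\mathbb{E}_{B}\to\mathbb{E}_{B}$ creates connected colimits (so in particular coequalizers and $\overrightarrow{\mathbb{N}}$-colimits exist in the fibre and are carried to the corresponding colimits of $\mathbb{E}_{B}$), that $\mathbb{E}_{B}\hookrightarrow\mathbb{E}$ preserves these by liberality of $\mathcal{E}$, and that the projection $Pt(\mathbb{E})\to\mathbb{E}$ reflects such colimits on pointed cocones. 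Concretely, given a coequalizer $q$ of a parallel pair $d_{0},d_{1}\colon(A,a)\to(B',b)$ in $Pt(\mathbb{E}_{B})$, its underlying arrow $q\colon B'\to Q$ is a coequalizer in $\mathbb{E}_{B}$, hence in $\mathbb{E}$, and its pointing is $c=qb$; then for any $h\colon(B',b)\to(M,m)$ of $Pt(\mathbb{E})$ with $hd_{0}=hd_{1}$ the factorization $g\colon Q\to M$ in $\mathbb{E}$ satisfies $gc=gqb=hb=mI(\mathtt{A}(h))=mI(\mathtt{A}(g))$, so $g$ is a morphism of $Pt(\mathbb{E})$ and is unique as such. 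The identical argument handles $\overrightarrow{\mathbb{N}}$-colimits (filtered colimits being connected, and $\overrightarrow{\mathbb{N}}$-colimits being preserved by $\mathbb{E}_{B}\hookrightarrow\mathbb{E}$). Thus the second defining clause holds and $Pt(\mathcal{E})$ is a liberal monoidal overcategory.

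For the last assertion I would simply observe that $\widetilde{I}(G)=(I(G),1_{I(G)})$ is the initial object of the coslice $Pt(\mathbb{E})_{G}=I(G)/\mathbb{E}_{G}$ — which holds whatever $I(G)$ does inside $\mathbb{E}_{G}$, whence the word ``trivially'' — and likewise that $\widetilde{I}(b)$ is initial in the fibre $Pt(\mathbb{E})_{b}$ over an arrow $b$ of $\mathbb{G}$ for the same coslice reason; in particular the hypotheses on the unit functor demanded in \cref{theoremdufreesurmonoid} are automatic for $Pt(\mathcal{E})$. Nothing here is deep: the only point requiring genuine (if routine) care is the verification that the comparison maps built fibrewise still respect the pointings when viewed in all of $Pt(\mathbb{E})$ — that is, the identity $gc=mI(\mathtt{A}(g))$ above — and that is the one step I would write out in full, the rest being bookkeeping.
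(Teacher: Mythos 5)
Your proposal is correct and follows essentially the route the paper intends: the paper gives no written proof, merely observing that overcoequalizers and $\overrightarrow{\mathbb{N}}$-overcolimits in $Pt(\mathcal{E})$ are computed as in $\mathcal{E}$ and declaring the proposition ``easy,'' and your fibrewise reduction to $Pt(\mathbb{E})_{B}=Pt(\mathbb{E}_{B})$ together with the Bourn--Penon liberality of pointed categories and the check that the fibre inclusions preserve the required colimits is exactly the intended argument, written out in more detail than the paper supplies.
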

The following proposition is easy. 
It is the overmonoidal version of the result in~\citet[1.2.1 page 10]{bournpenon:moncart}. 
\begin{proposition}
  \label{propsurlesisodesurmo}
  If
  $\mathcal{E}=(\mathbb{E},\mathtt{A},\bigotimes,I,u_{l},u_{r},aso)$
  is a monoidal overcategory, then we have the commutative triangle
  \[\xymatrix{/\mathbb{M}on(\mathbb{E},\mathtt{A})\ar[rd]_U\ar[r]^(.45){\varphi}  
    &/\mathbb{M}on(Pt(\mathbb{E}),\widetilde{\mathtt{A}})\ar[d]^{U'}\\
    &Pt(\mathbb{E})}\]
  such that $\varphi$ is an isomorphism given by
  $\varphi((C,e,m;C_{0}))=((C,e),e,m;C_{0})$, and with
  $U((C,e,m;C_{0}))=(C,e)$ and $U'(((C,x),e,m;C_{0}))=(C,e)$.
\end{proposition}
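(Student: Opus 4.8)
The plan is to write down $\varphi$ and its inverse by hand and then observe that everything required is fibrewise and essentially forced. Recall that an overmonoid of $\mathcal{E}$ is a tuple $(C,e,m;C_{0})$ with $(C,e,m)$ a monoid in the fibre $\mathbb{E}_{C_{0}}$, whereas an overmonoid of $Pt(\mathcal{E})$ is a tuple $((C,x),e,m;C_{0})$ with $((C,x),e,m)$ a monoid in the fibre $Pt(\mathbb{E})_{C_{0}}$; here $x$ is a point $I(C_{0})\xrightarrow{x}C$ over $1_{C_{0}}$, the unit $e$ is a morphism $\widetilde{I}(C_{0})\to(C,x)$ of $Pt(\mathbb{E})$, and the multiplication $m$ is a morphism $(C,x)\widetilde{\bigotimes}(C,x)\to(C,x)$ of $Pt(\mathbb{E})$. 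The key observation, on which the whole proof rests, is that in such a tuple the point and the unit coincide: since $\widetilde{I}(C_{0})=(I(C_{0}),1_{I(C_{0})})$, saying that $e\colon I(C_{0})\to C$ underlies a morphism $\widetilde{I}(C_{0})\to(C,x)$ of $Pt(\mathbb{E})$ means precisely $e\cdot 1_{I(C_{0})}=x\cdot I(1_{C_{0}})$, i.e. $e=x$. Hence the underlying data of an overmonoid of $Pt(\mathcal{E})$ are exactly $(C,e,m;C_{0})$, and I would set $\varphi^{-1}((C,x),e,m;C_{0}):=(C,e,m;C_{0})$.

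Conversely, starting from an overmonoid $(C,e,m;C_{0})$ of $\mathcal{E}$ I would check that $\big((C,e),e,m\big)$ is a monoid in the fibre $Pt(\mathbb{E})_{C_{0}}$. The unit condition is trivial by the computation just made. The only slightly non-formal point is that $m$ respects the pointed structure, i.e. that $m$ carries the point $(e\otimes e)\,u_{l}(I(C_{0}))^{-1}$ of $(C,e)\widetilde{\bigotimes}(C,e)$ to the point $e$ of $(C,e)$; writing $e\otimes e=(1_{C}\otimes e)(e\otimes 1_{I(C_{0})})$, using the right-unit monoid law $m(1_{C}\otimes e)=u_{r}(C)$, the naturality of $u_{r}$, and the standard monoidal identity $u_{l}(I(C_{0}))=u_{r}(I(C_{0}))$ in the fibre $\mathbb{E}_{C_{0}}$, one gets $m(e\otimes e)=e\,u_{r}(I(C_{0}))=e\,u_{l}(I(C_{0}))$, hence $m(e\otimes e)\,u_{l}(I(C_{0}))^{-1}=e$, as required. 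Once this is in place, the three monoid axioms transfer in both directions for free: the fibre projection $Pt(\mathbb{E})_{C_{0}}\to\mathbb{E}_{C_{0}}$ is faithful, and, as already established in the proof of \cref{laPointedestSurmonoidal}, $\widetilde{\bigotimes},\widetilde{I},\widetilde{u}_{l},\widetilde{u}_{r},\widetilde{aso}$ all lie over $\bigotimes,I,u_{l},u_{r},aso$, so an equation of structure morphisms holds in $Pt(\mathbb{E})_{C_{0}}$ iff its image holds in $\mathbb{E}_{C_{0}}$.

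It then remains to match morphisms and check commutativity. A morphism $((C,x),e,m;C_{0})\to((C',x'),e',m';C_{0}')$ of $/\mathbb{M}on(Pt(\mathbb{E}),\widetilde{\mathtt{A}})$ is a pair $(f,f_{0})$ with $f\colon(C,x)\to(C',x')$ a morphism of $Pt(\mathbb{E})$ over $f_{0}$ compatible with the multiplications and the units; since $e=x$ and $e'=x'$, the point-compatibility $fx=x'\,I(f_{0})$ is literally the unit-compatibility $fe=e'\,I(f_{0})$ occurring in the definition of a morphism of overmonoids of $\mathcal{E}$, so $\varphi$ is fully faithful, and it is bijective on objects by the first two paragraphs; hence an isomorphism of categories. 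Finally $U'\big(\varphi(C,e,m;C_{0})\big)=U'\big((C,e),e,m;C_{0}\big)=(C,e)=U(C,e,m;C_{0})$, so the triangle commutes. I do not expect any real obstacle: the whole content is the identification $e=x$ of the first paragraph together with the short fibrewise computation of the second, everything else being the kind of bookkeeping already handled non-overcategorically in \citet[1.2.1 page~10]{bournpenon:moncart}.
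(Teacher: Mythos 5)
Your argument is correct, and it is essentially the intended one: the paper itself gives no proof of this proposition (it is declared ``easy'' and referred back to the non-overcategorical version in \citet[1.2.1 page~10]{bournpenon:moncart}), and the only real content is exactly what you isolate, namely that the unit of an overmonoid in $Pt(\mathcal{E})$ is forced to coincide with the point (since $\mathtt{A}(e)=1_{C_{0}}$ gives $e=e\cdot 1_{I(C_{0})}=x\cdot I(1_{C_{0}})=x$), together with the fibrewise check that $m$ respects the induced point of $(C,e)\widetilde{\bigotimes}(C,e)$. Your computation of that last point via the unit law, naturality of $u_{r}$ and the coherence identity $u_{l}(I(C_{0}))=u_{r}(I(C_{0}))$ is sound, so nothing is missing.
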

With the theorem and the previous propositions we have at once:
\begin{theorem}
  \label{theoremdufreesurmoversuspointe}
  If
  $\mathcal{E}=(\mathbb{E},\mathtt{A},\bigotimes,I,u_{l},u_{r},aso)$
  is a liberal monoidal overcategory then the forgetful overfunctor
  \[\xymatrix{(/\mathbb{M}on(\mathbb{E},\mathtt{A}),\mathtt{A})          
      \ar[r]^(.55){U}&(Pt(\mathbb{E}),\widetilde{\mathtt{A}})},
  \xymatrix{(C,e,m;C_{0})\ar@{|->}[r]&(C,e)}\] has a left overadjoint
  and is overmonadic.
\end{theorem}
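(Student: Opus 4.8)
The plan is to deduce the statement directly from \cref{theoremdufreesurmonoid}, applied not to $\mathcal{E}$ itself but to its pointed overmonoidal overcategory $Pt(\mathcal{E})$, and then to transport the conclusion back along the isomorphism $\varphi$ of \cref{propsurlesisodesurmo}.

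First I would check that $Pt(\mathcal{E})$ satisfies the hypotheses of \cref{theoremdufreesurmonoid}. By \cref{laPointedestSurmonoidal} the pair $Pt(\mathcal{E})$ is indeed a monoidal overcategory, and by \cref{laPointedestLiberal} it is liberal and its unity functor $\widetilde{I}$ sends every object $B\in\mathbb{G}(0)$ (resp.\ every arrow $b\in\mathbb{G}(1)$) to an initial object of the fiber $Pt(\mathbb{E})_{B}$ (resp.\ $Pt(\mathbb{E})_{b}$). These are exactly the two initiality conditions demanded in the statement of \cref{theoremdufreesurmonoid}. Hence that theorem applies verbatim with $Pt(\mathcal{E})$ in the role of $\mathcal{E}$, and tells us that the forgetful overfunctor
\[U'\colon (/\mathbb{M}on(Pt(\mathbb{E}),\widetilde{\mathtt{A}}),\widetilde{\mathtt{A}})\longrightarrow (Pt(\mathbb{E}),\widetilde{\mathtt{A}})\]
has a left overadjoint $M'\dashv U'$ and is overmonadic.

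Next I would invoke \cref{propsurlesisodesurmo}, whose commutative triangle exhibits an isomorphism of overcategories $\varphi$ over $\mathbb{G}$ with $U=U'\varphi$. Since $\varphi$ is invertible in $\C/\mathbb{G}$, its inverse $\varphi^{-1}$ is a left overadjoint of $\varphi$, so composing the overadjunction $M'\dashv U'$ with $\varphi^{-1}\dashv\varphi$ produces a left overadjoint $\varphi^{-1}M'\dashv U'\varphi=U$ (the unit and counit are obtained from those of $M'\dashv U'$ by pasting with $\varphi$, and everything respects the base $\mathbb{G}$). Thus $U$ has a left overadjoint. For the overmonadicity, I would observe that the overmonad induced on $(Pt(\mathbb{E}),\widetilde{\mathtt{A}})$ by $\varphi^{-1}M'\dashv U'\varphi$ is $U'\varphi\varphi^{-1}M'=U'M'$, i.e.\ the very overmonad $T'$ induced by $M'\dashv U'$; moreover the comparison overfunctor attached to $U$ factors as $K'\circ\varphi$, where $K'$ is the comparison overfunctor attached to $U'$, an isomorphism in $\C/\mathbb{G}$ by the overmonadicity of $U'$. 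A composite of isomorphisms of $\C/\mathbb{G}$ is an isomorphism, so $K'\varphi$ is one, and \cref{surbeck} then gives the overmonadicity of $U$.

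There is essentially no real obstacle here: the whole content is packed into \cref{theoremdufreesurmonoid}, and the present statement is obtained merely by pre- and post-composing with the isomorphism $\varphi$ of \cref{propsurlesisodesurmo}. The only point worth a moment's care is the bookkeeping of arity functors, namely that $\varphi$, $\varphi^{-1}$, $M'$ and the comparison overfunctors are genuine morphisms of $\C/\mathbb{G}$ (that they respect the base category $\mathbb{G}$); this is immediate from the explicit descriptions recalled in \cref{laPointedestSurmonoidal} and \cref{propsurlesisodesurmo}.
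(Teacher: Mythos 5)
Your proposal is correct and follows exactly the route the paper intends: the paper derives this theorem "at once" from \cref{theoremdufreesurmonoid} applied to $Pt(\mathcal{E})$ (whose hypotheses are supplied by \cref{laPointedestLiberal}) together with the isomorphism $\varphi$ of \cref{propsurlesisodesurmo}. You have merely spelled out the transport of the overadjunction and of overmonadicity across $\varphi$, which the paper leaves implicit.
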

\begin{remark}
  \label{SystemesDOperations}
  Let us denote by $M$ the left overadjoint of $U$, then if we applied
  $(X,x)\in Pt(\mathbb{E})$ to the unity
  $1_{Pt(\mathbb{E})}\xrightarrow{l}UM$ of this overadjunction, we
  obtain the morphism $(X,x)\xrightarrow{l((X,x))}U(M(X,x))$ of
  $Pt(\mathbb{E})$ i.e
  $(X,x)\xrightarrow{l((X,x))}U(\overline{X},e,m;X_{0})=(\overline{X},e)$.
  And in particular this morphism gives us the equality $l((X,x))x=e$.
  This equality is important because it shows, in the particular
  context of colored operads of \cite{kach:nscellsfinal} and \cite{kach:weak_des_weak}, that the
  operads of weak higher transformations are well-provided with a system of
  operations.
\end{remark}

    \vspace{1cm}

\vspace{1cm}

  \bigbreak{}
  \begin{minipage}{1.0\linewidth}
    Camell \textsc{Kachour}\\
    Macquarie University, Department of Mathematics\\
    Phone: 00612 9850 8942\\
    Email:\href{mailto:camell.kachour@mq.edu.au}{\url{camell.kachour@mq.edu.au}}
  \end{minipage}
  

\begin{thebibliography}{000}


\bibitem{bat:monglob} Michael Batanin, \textit{Monoidal Globular Categories As a Natural Environment for
		  the Theory of Weak-$n$-Categories}, Advances in Mathematics (1998), volume 136, pages 39--103 .\label{bat:monglob}
\bibitem{francisborceux:handbook2} Francis Borceux, \textit{Handbook of Categorical Algebra}, Cambridge University Press (1994), volume 2.
\label{francisborceux:handbook2}

\bibitem{maclane:1998} Saunders Mac Lane, \textit{Categories for the Working Mathematician}, Springer-Verlag, New York (1998), volume 5.
   .\label{maclane:1998}

\bibitem{bournpenon:moncart} , \textit{Cat{\'e}gorification de structures d{\'e}finies par monade cart{\'e}sienne}, Cahiers de Topologie et de G{\'e}om{\'e}trie
		  Diff{\'e}rentielle Cat{\'e}gorique (2005), pages 2--52.\label{bournpenon:moncart}
 \bibitem{kach:nscellsfinal} Camell Kachour, \textit{Operadic Definition of the Non-strict Cells}, Cahiers de Topologie et de G{\'e}om{\'e}trie
		  Diff{\'e}rentielle Cat{\'e}gorique (2011), page 1--48.\label{kach:nscellsfinal}
  \bibitem{barrwells:ttt} Michael Barr and Charles Wells, \textit{Toposes, Triples and Theories}, Theory Appl. Categ. (2005).\label{barrwells:ttt}

\bibitem{jean_benabou1985} , \textit{Fibred categories and the foundation of naive category theory}, J.Symbolic Logic (1985), pages 10--37.\label{jean_benabou1985}

\bibitem{Street_Schuma} , \textit{Some parametrized categorical concepts}, Communications in Algebra (1988), volume 16, pages 2313-2347.\label{Street_Schuma}

\bibitem{Pare_Schuma} Robert Par{\'e} and Dietmar Schumacher, \textit{Abstract Families and the Adjoint Functor Theorems}, Lecture Notes in Math (1978), volume 661, pages 
1--125.\label{Pare_Schuma}
\bibitem{kach3:redmacq} Camell Kachour, \textit{Steps toward an Operadical approach of the Weak Omega Category of the Weak Omega Categories}, Talks in the 
Australian Category Seminar, september 2010.\label{kach3:redmacq}



 \end{thebibliography}
\end{document}